\newlist{condenum}{enumerate}{1} 
\setlist[condenum]{label=\bfseries Condition \arabic*.,  ref=\arabic*, wide}
\definecolor{darkblue}{rgb}{0,0,0.3}
\definecolor{urlblue}{rgb}{0,0,0.7}
\numberwithin{equation}{section}
\theoremstyle{plain}
\newtheorem{theorem}{Theorem}[section]
\newtheorem{lem}[theorem]{Lemma}
\newtheorem{claim}{Claim}
\newtheorem{cor}[theorem]{Corollary}
\theoremstyle{definition}
\newtheorem{defn}[theorem]{Definition}
\newtheorem*{theorem*}{Theorem}
\newtheorem{rmk}[theorem]{Remark}
\newcommand{\R}{\mathbb{R}}
\newcommand{\RR}{\mathbb{R}^2}
\newcommand{\Rm}{\textnormal{Rm}}
\newcommand{\diam}{\textnormal{diam}}
\newcommand{\eps}{\varepsilon}
\renewcommand{\tilde}{\widetilde}
\newcommand{\D}{\nabla}
\newcommand{\p}{\partial}
\renewcommand{\P}[1]{{\big|\p #1\big|}}
\newcommand{\Ps}[1]{{|\p #1|}}
\DeclareMathOperator{\Lip}{Lip}
\DeclareMathOperator{\loc}{loc}
\DeclareMathOperator{\inj}{inj}
\renewcommand{\epsilon}{\varepsilon}
\let\div\undefined
\DeclareMathOperator{\div}{div}
\renewcommand{\bar}[1]{\overline{#1}}
\newcommand{\cC}{\mathcal{C}}
\newcommand{\Z}{\mathbb{Z}}
\newcommand{\TODO}[1]{}
\begin{document}

\title[3D manifolds with PSC and bounded geometry]{3-Manifolds with positive scalar curvature and bounded geometry}

\begin{abstract}
    We show that a complete contractible 3-manifold with positive scalar curvature and bounded geometry must be $\R^3$. We also show that an open handlebody of genus larger than 1 does not admit complete metrics with positive scalar curvature and bounded geometry. Our results rely on the maximal weak solution to inverse mean curvature flow due to the third-named author.
\end{abstract}

\author{Otis Chodosh}
\address{Otis Chodosh\hfill\break Department of Mathematics, Stanford University, CA 94305, USA}
\email{\href{mailto:ochodosh@stanford.edu}{ochodosh@stanford.edu}}

\author{Yi Lai}
\address{Yi Lai\hfill\break Department of Mathematics, UC Irvine, CA 92697, USA
}
\email{\href{mailto:ylai25@uci.edu}{ylai25@uci.edu}}

\author{Kai Xu}
\address{Kai Xu\hfill\break Department of Mathematics, Duke University, Durham, NC 27705, USA}
\email{\href{mailto:kai.xu631@duke.edu}{kai.xu631@duke.edu}}

\maketitle


\section{Introduction}

We are concerned here with the classification problem for $3$-manifolds $M$ admitting complete Riemannian metrics with positive scalar curvature; see Question 27 in Yau's list \cite{yau1982problem}. A fundamental observation of Schoen--Yau relates scalar curvature to the stability of minimal surfaces \cite{SchoenYau1979}, which ultimately leads to several topological obstructions to the existence of complete nonnegative scalar curvature metrics on a $3$-manifold $M$. First, if $\pi_1(M)$ contains a surface subgroup, then $M$ is flat by Schoen-Yau \cite[Theorem 4]{schoen1982complete} and Gromov--Lawson \cite[Theorem 8.4]{gromovlawson1983}. Second, J.\ Wang \cite{Wang1,Wang2} proved that if $M$ is contractible and admits an exhaustion by solid tori, then $M \cong \R^3$ (cf.\ Lemma \ref{lem:contractT2}). In particular, the Whitehead manifold does not admit such a metric. See also relevant results \cite{CRZ23,zhu2021,Chen:SYS,ChCh24,lott2024obstructions,Zhu:calabi}. In spite of these results, the general classification for noncompact $M$ admitting such a metric is widely open. In particular, we note the following special cases:
\begin{itemize}
    \item If $M$ is a contractible $3$-manifold and admits a complete metric of nonnegative scalar curvature, do we have $M \cong \R^3$? (Asked by J.\ Wang \cite{Wang1,Wang2}, cf.\ \cite{changweinbergeryu2010}.)
    \item If $M_\gamma $ is an open handlebody of genus $\gamma$, and admits a complete metric of non-negative scalar curvature, do we have $\gamma \leq 1$? (Asked by Gromov \cite[\S 3.10.2]{gromov4lectures}.)
\end{itemize}

In this paper, we resolve these two questions under the \emph{additional} assumption that the metric has bounded geometry:
\begin{equation}\label{eq:bounded_geometry}
    |\Rm|\le\Lambda,\qquad \textnormal{inj}\ge \Lambda^{-1},\tag{BG}
\end{equation}
by combining minimal surface obstructions with new topological constraints obtained using inverse mean curvature flow. In particular, our main results are as follows. Hereafter, we use $R$ to denote the scalar curvature.

\begin{theorem}\label{thm-main:R3}
    Let $(M,g)$ be a complete, connected, contractible Riemannian 3-manifold satisfying $R\geq0$ and \eqref{eq:bounded_geometry}. Then $M$ is diffeomorphic to $\mathbb{R}^3$.
\end{theorem}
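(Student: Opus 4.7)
My strategy is to reduce to Wang's theorem (cited in the introduction as Lemma~\ref{lem:contractT2}), which asserts that a contractible $3$-manifold admitting an exhaustion by solid tori is diffeomorphic to $\R^3$. Concretely, for every compact $K \subset M$ I would produce an embedded solid torus or $3$-ball in $M$ containing $K$, and then take a cofinal sequence of such regions.

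The main tool will be the maximal weak inverse mean curvature flow (IMCF) of the third-named author. Given $K$, I would start the weak IMCF from a small sphere near $K$. Under \eqref{eq:bounded_geometry} the weak flow should exist for all time, with area growing exponentially and the enclosed regions $\Omega_t$ exhausting $M$ as $t\to\infty$. Geroch's Hawking-mass monotonicity, preserved through the jumps of the weak flow under $R\ge 0$, controls the topology of the flow surfaces: per connected component, schematically
\[
\frac{d}{dt}\,m_H(\Sigma_t)\;\ge\; c\,\sqrt{|\Sigma_t|}\cdot\chi(\Sigma_t),
\]
so any component of genus $\ge 2$ persisting on an interval would drive $m_H$ to $-\infty$ faster than the coarse lower bound on $m_H$ coming from \eqref{eq:bounded_geometry} (which bounds $|H|$ and the local geometry of $\Sigma_t$). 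Hence for cofinally many times $t$, every connected component of $\Sigma_t$ is a $2$-sphere or a $2$-torus.

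It remains to convert this surface-level conclusion into topology of $\Omega_t$. Sphere components are straightforward: in a contractible (hence simply-connected) $3$-manifold, an embedded $2$-sphere bounds a $3$-ball on its compact side (sphere theorem and Perelman's resolution of the Poincar\'e conjecture). For torus components, the loop theorem of Papakyriakopoulos furnishes a compressing disk on some side of $\Sigma_t$; one then needs to show the compact side $\Omega_t$ is itself a solid torus, whence Wang's theorem produces $M\cong\R^3$.

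The main obstacle is precisely this last topological step. The Whitehead manifold is a contractible $3$-manifold containing embedded $2$-tori that bound solid tori on neither side, so a purely topological argument cannot work---the scalar curvature hypothesis must enter the topological bootstrap essentially. I expect the argument either invokes Schoen--Yau-type rigidity to rule out stable minimal surfaces of positive genus inside $\Omega_t$ (obstructing any nontrivial JSJ piece), or iterates the weak IMCF inside $\Omega_t$ to manufacture the required compressing disk explicitly. Either way, this is where the interplay between minimal-surface obstructions and the third author's IMCF---advertised in the abstract---should be decisive.
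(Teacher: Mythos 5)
Your proposal identifies the right high-level strategy---produce, via IMCF plus Geroch monotonicity, an exhaustion of $M$ by precompact regions with spherical or toroidal boundary, then finish with a topological theorem of J.~Wang---and this is indeed the paper's plan in broad strokes. However, there are two serious issues.

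The central gap is your assumption that ``under \eqref{eq:bounded_geometry} the weak flow should exist for all time, with area growing exponentially and the enclosed regions $\Omega_t$ exhausting $M$ as $t\to\infty$.'' This is precisely what the paper \emph{cannot} assume, and getting around its failure is the main technical contribution. The maximal weak IMCF of the third author can fail to be proper in two distinct ways (see Lemma~\ref{l: three cases}): it can \emph{sweep} out all of $M$ in finite time, or it can \emph{instantly escape}, meaning it jumps from a compact region $E_T$ to all of $M$ at a finite time $T$ with no level sets in between. Bounded geometry alone does not prevent either behavior. In the escaping case there is literally no exhaustion by level sets to which Geroch monotonicity could be applied, so your argument collapses. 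The paper handles sweeping by extracting a diverging sequence of level sets with uniform $C^{1,\alpha}$ bounds and an almost-minimizing property, passing to a pointed limit, and invoking Schoen--Yau to constrain the genus of the limit surface. The escaping case requires a perturbation of the metric near infinity (Lemma~\ref{lemma:perturb} and Theorem~\ref{c: almost minimizing exhaustion}) to force new level sets to appear, which are then treated by the same limiting argument. Your proposal covers essentially only the proper case.

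A secondary issue: your statement of Wang's theorem is not correct as written. You say it ``asserts that a contractible $3$-manifold admitting an exhaustion by solid tori is diffeomorphic to $\R^3$,'' but the Whitehead manifold \emph{is} exhausted by solid tori and is not $\R^3$. Wang's actual result is that a contractible genus-one $3$-manifold other than $\R^3$ admits no complete metric of positive scalar curvature; the hypothesis $R>0$ is indispensable and is exactly how the geometry enters the topological bootstrap. Lemma~\ref{lem:contractT2} of the paper carries out the loop-theorem compression analysis you gesture at to reduce to Wang's theorem (or to the spherical case), and you are right that this is delicate, but the missing ingredient is the curvature hypothesis in Wang's theorem, not Schoen--Yau rigidity of minimal surfaces inside $\Omega_t$ or an iterated IMCF.
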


\begin{theorem}\label{thm-main:handlebody}
    Let $M_\gamma$ denote the interior of the handlebody of genus $\gamma$. If $(M_\gamma,g)$ is a complete Riemannian $3$-manifold satisfying $R\geq0$ and \eqref{eq:bounded_geometry}, then $\gamma \leq 1$.
\end{theorem}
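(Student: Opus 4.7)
The plan is to argue by contradiction: assume $\gamma \geq 2$ and derive an impossibility by running the maximal weak inverse mean curvature flow (IMCF) of Xu, combined with the topological structure of the handlebody and a Geroch-type monotonicity for Hawking mass. For the setup, I would choose a compact subset $E_0 \subset M_\gamma$ that is itself a handlebody of genus $\gamma$ with $\partial E_0$ in the interior of $M_\gamma$, so that $\partial E_0$ is a closed surface of genus $\gamma$ and $M_\gamma \setminus E_0 \cong \Sigma_\gamma \times [0, \infty)$. Using the bounded geometry hypothesis \eqref{eq:bounded_geometry} to supply the uniform estimates needed to initiate and run the flow, I would apply Xu's maximal weak IMCF starting from $E_0$ to obtain a monotone family $\{E_t\}_{t \geq 0}$ with $E_t \nearrow M_\gamma$ and $|\partial E_t| \to \infty$ as $t \to \infty$.

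The next step is the topological claim that $\chi(\partial E_t) \leq 2(1-\gamma) \leq -2$ for all sufficiently large $t$. Since $E_t \supset E_0$ and the complement $M_\gamma \setminus E_0$ is the collar $\Sigma_\gamma \times [0, \infty)$, the inclusion $E_0 \hookrightarrow M_\gamma$ being a homotopy equivalence forces $H_1(E_t)$ to surject onto $\pi_1(M_\gamma)^{\mathrm{ab}} = \mathbb{Z}^\gamma$. Coupled with an analysis of the outer-minimizing envelopes produced by Xu's flow, which should not be able to pinch off handles in the end, one concludes that $E_t$ remains homotopy equivalent to the genus-$\gamma$ handlebody $E_0$, giving $\chi(E_t) = 1-\gamma$ and hence $\chi(\partial E_t) = 2\chi(E_t) = 2(1-\gamma)$.

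The main analytic input is a generalized Geroch monotonicity from Xu's framework: under $R \geq 0$, the Euler-characteristic-corrected Hawking mass
\[
    m_H^{\mathrm{top}}(\partial E_t) := \sqrt{\frac{|\partial E_t|}{16\pi}} \left( \frac{\chi(\partial E_t)}{2} - \frac{1}{16\pi} \int_{\partial E_t} H^2 \right)
\]
should be nondecreasing along the maximal weak flow, hence bounded below by the finite constant $m_H^{\mathrm{top}}(\partial E_0)$. On the other hand, since $\chi(\partial E_t)/2 \leq -1$ and $H^2 \geq 0$,
\[
    m_H^{\mathrm{top}}(\partial E_t) \leq \sqrt{\frac{|\partial E_t|}{16\pi}} \cdot \frac{\chi(\partial E_t)}{2} \leq -\sqrt{\frac{|\partial E_t|}{16\pi}} \longrightarrow -\infty
\]
as $t \to \infty$, contradicting monotonicity.

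The main obstacle will be making the topological control of the second paragraph precise: a priori, jumps in the weak flow could pinch off handles and reduce the genus of $\partial E_t$, weakening the bound on $\chi$. A second subtle point is the exact form of the Euler-characteristic-corrected monotonicity, particularly across such topology jumps. Both hinge on specific features of Xu's maximal weak IMCF, which is designed precisely to make such arguments viable in the open, noncompact, topologically nontrivial setting.
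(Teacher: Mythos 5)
Your argument has a genuine gap at the very first step: you assume that the maximal weak IMCF is \emph{proper}, i.e.\ that $E_t \Subset M_\gamma$ for all $t\ge 0$, that $E_t \nearrow M_\gamma$, and that $|\partial E_t| \to \infty$ as $t\to\infty$. This is not automatic and is in fact the central difficulty the paper must confront. Under bounded geometry and one-endedness, Lemma~\ref{l: three cases} shows the maximal flow falls into exactly one of three cases: (i) proper, (ii) sweeping (the flow fills $M$ at some finite time $T$), or (iii) instantly escaping (the flow has $E_T\Subset M$ at some finite $T$ and $u\equiv T$ outside $E_T$, so there are no further level sets at all). Your Hawking-mass contradiction only engages case (i): in the sweeping case the parameter $t$ is bounded by $T<\infty$ and $|\partial E_t|\le e^t|\partial E_0|$ stays bounded, so $-\sqrt{|\partial E_t|/16\pi}$ does \emph{not} tend to $-\infty$; in the instantly escaping case the family $\{E_t\}$ simply does not exhaust $M_\gamma$. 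The bulk of the paper (Sections~\ref{sec:escaping} and~\ref{sec:proof}, especially the perturbation Lemma~\ref{lemma:perturb} and Theorem~\ref{c: almost minimizing exhaustion}) is devoted to extracting a useful exhausting sequence with controlled topology in cases (ii) and (iii) by a compactness/limiting argument near infinity — you cannot avoid this.

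Within case (i) your approach is close to the paper's but has a secondary weak point you have already flagged: the claim that $E_t$ stays homotopy equivalent to the genus-$\gamma$ handlebody $E_0$ is both unnecessary and hard to justify, since the weak flow's jumps only give outward perimeter-minimizing sets, not any homotopy control. What is actually true and suffices is the purely topological Lemma~\ref{lemma:handlebody_exhaustion}: for \emph{any} exhaustion with connected $\Omega_i$ and connected $M\setminus\Omega_i$ (both supplied for the maximal IMCF by Lemma~\ref{lemma:maximal_sol}(ib,\,ic)), one gets $\mathrm{genus}(\partial\Omega_i)\ge\gamma$ for large $i$ by a degree/Kneser argument, without asserting anything about the interior topology of $\Omega_i$. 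Once that replacement is made, your monotonicity contradiction in the proper case is correct (and is essentially equivalent to the paper's more elementary observation that $\int_0^t 4\pi\chi(\partial E_s)\,ds$ is bounded below, so $\chi(\partial E_{t_i})\ge -1$ along some $t_i\to\infty$, followed by Lemma~\ref{lemma:handlebody_exhaustion}). But the proper case alone does not prove the theorem.
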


Note that $\R^3$ and $\R^2\times\mathbb S^1$ (corresponding to $\gamma=0,1$ in Theorem \ref{thm-main:handlebody}) both admit complete metrics with $R\geq0$ and bounded geometry. Concrete examples are a capped-off half-cylinder (which actually has $R\geq 1$) and the product of Cigar soliton and $\mathbb S^1$, respectively.


For the stronger uniformly positive scalar curvature condition $R\geq1$, J.\ Wang has obtained a complete classification \cite{Wang23positive}: these 3-manifolds are infinite connect sums of spherical space forms and $S^2\times S^1$. In particular, the only contractible manifold or handlebody admitting such a metric is $ \R^3$. We note that earlier work of Bessi\`eres--Besson--Maillot \cite{Besson} used Ricci flow to prove such a classification with an additional bounded geometry assumption. 

A crucial tool in J.\ Wang's classification \cite{Wang23positive} is the $\mu$-bubbles introduced by Gromov \cite[\S 3.7.2]{gromov4lectures}. From this one obtains that a $3$-manifold $M$ with $R\geq1$ admits an exhaustion by regions whose boundary consists of $\mathbb S^2$'s. This places very strong topological constraints on $M$ (see also \cite{chodosh2024completeriemannian4manifoldsuniformly}). When the scalar curvature is only non-negative, it's not known how to use the $\mu$-bubble method to obtain such an exhaustion (in particular, the genus one handlebody shows that one must allow for torus boundaries in this exhaustion). As we will discuss below, one of our main contributions here is to find an exhaustion with only sphere or tori boundary in certain $3$-manifolds with non-negative scalar curvature.

\subsection{Topological obstructions via inverse mean curvature flow}

The key novelty introduced in this paper is the use of inverse mean curvature flow as a replacement for $\mu$-bubbles in topological applications. A family of hypersurfaces is a smooth inverse mean curvature flow (IMCF) if it evolves in the outwards pointing direction with speed $\frac{1}{H}$, where $H$ denotes the mean curvature.

The relevance of this flow to scalar curvature is the following: if $(M,g)$ is a Riemannian 3-manifold with nonnegative scalar curvature, and $\Sigma_t\subset M$ is a compact family evolving by the smooth IMCF, then $|\Sigma_t| = e^t |\Sigma_0|$ and
\begin{equation}\label{eq-intro:Geroch}
    \frac{d}{dt} \int_{\Sigma_t} H^2 \leq -\frac 12 \int_{\Sigma_t} H^2 + 4\pi  \chi(\Sigma_t).
\end{equation}
This is known as the Geroch monotonicity formula \cite{geroch1973energy}. In particular, if the flow exists for all time $t \in [0,\infty)$ then $\Sigma_t$ cannot have genus $\geq 2$ for all large $t$, since otherwise \eqref{eq-intro:Geroch} would force $ \int_{\Sigma_t} H^2$ to be negative for $t \gg1$, which is impossible. In particular, this implies that $M$ admits an exhaustion by regions with sphere or torus boundaries, strongly constraining its topology.

However, there are major issues with the assumption of long-time existence in practice. First, singularities are likely to develop along the flow. Secondly, it's possible that the flow rushes to infinity in finite time if the infinity is not ``large'' enough.

To allow for singularities, we can use the notion of weak IMCF introduced by Huisken--Ilmanen \cite{HI01} en route to their proof of the Riemannian Penrose inequality. Intuitively, this solution can be described as running the smooth flow except at each time replacing $\Sigma_t$ by its least area enclosure. As proven by Huisken--Ilmanen, the Geroch monotonicity \eqref{eq-intro:Geroch} remains true for weak solutions as long as they exist.

A weak IMCF that does not rush to infinity in finite time is called proper. In \cite{HI01} and \cite{BrayNeves04,BrendleChodosh,BrendleHungWang,Shi:iso,Chodosh:AH-iso,ChodoshEichmairShiZhu,JaureguiLee,CESY,ChodoshEichmair:anm,AFMM,huisken2024inverse}, proper IMCF are used as central tools in several scalar curvature problems. We also refer to \cite{kotschwarNi,mariRigoliSetti,Xu24proper} about proper IMCF.

In our current setting, we inevitably encounter non-proper weak IMCFs (i.e. weak IMCFs that rush to infinity within finite time). We make essential use of the third-named author's recent work \cite{Xu24obstacle}, which shows that $(M,g)$ always admits a ``maximal'' (or ``innermost'', ``slowest'') weak IMCF.
Assuming bounded geometry and one-endedness of $M$, we show that the maximal weak solution satisfies exactly one of the following three possibilities, see Lemma \ref{l: three cases}: 

\begin{enumerate}
\item [(i)] Proper: The solution exists and remains bounded for all time.
\item [(ii)] Sweeping: The solution entirely moves to infinity at some time $T\in(0,\infty)$.
\item [(iii)] Escaping: The solution exists until a time $T\in(0,\infty)$, then ``jumps'' to infinity. 
\end{enumerate}
In the proper case (i), we can obtain a topological obstruction using the monotonicity formula \eqref{eq-intro:Geroch} as above. Now we consider the remaining cases (ii) (iii); see Figure \ref{fig:sweep_escape} for examples of each of these cases.

\vspace{3pt}

\begin{figure}[h]
    \centering
    \includegraphics[width=0.9\textwidth]{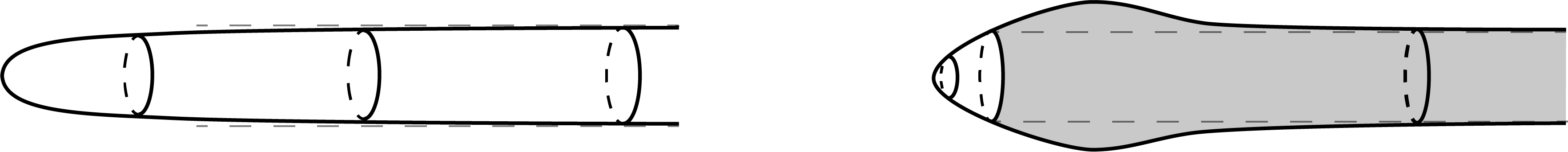}
    \caption{An IMCF sweeping out the manifold at $t=T$ (left) and one that escapes at $t=T$ (right).}
    \begin{picture}(0,0)(0,-13)
        \put(-166,70){$\Sigma_0$}
        \put(-120,70){$\Sigma_{T-0.1}$}
        \put(-54,70){$\Sigma_{T-0.01}$}
        \put(27,61){$\Sigma_0$}
        \put(42,70){$\Sigma_T$}
        \put(80,50){$u\equiv T$}
    \end{picture}\label{fig:sweep_escape}
\end{figure}

First, we consider the case of sweeping flow. We can show that for a sequence of times $t_i\nearrow T$, the surfaces $\Sigma_{t_i}$ have uniformly bounded diameters, are uniformly $C^{1,\alpha}$-smooth, and are ``almost" area-minimizing. Taking a subsequential limit, we obtain an area-minimizing hypersurface in some limit of $M$ at infinity. By the scalar curvature lower bound, this limiting hypersurface must be $\mathbb S^2$ or $\mathbb T^2$, which in turn implies that all but finitely many $\Sigma_{t_i}$ are $\mathbb S^2$ or $\mathbb T^2$. This again puts strong constraints on the topology of $M$.

Finally, we consider the case of escaping flow. In order to find a nice exhausting sequence and perform a limiting argument, we make a small perturbation of the metric so that it becomes ``larger at infinity''. This will delay the escape time of the maximal IMCF, thus some new level set will form in the edited region. Letting the edited region diverge to infinity and making the perturbation smaller and smaller, we obtain another diverging sequence of hypersurfaces which are ``almost" area-minimizing as well. Then the limiting argument in case (ii) is employed to prove the main theorems. We refer to Section \ref{sec:escaping} for more details. 


\begin{figure}[h]
    \centering
    \includegraphics[width=0.6\linewidth]{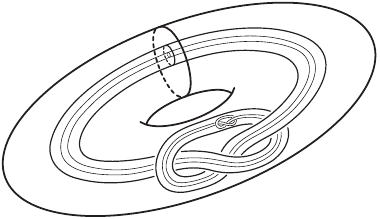}
    \caption{Iterated trefoils. The figure shows the embedding $\Omega_1\subset\Omega_2\subset\Omega_3$.}
    \label{fig:trefoil}
\end{figure} 

\subsection{Manifolds that are nested by solid tori} 
Our work suggests that it might be possible to give a complete classification of $3$-manifolds admitting a metric of non-negative scalar curvature with bounded geometry. The techniques developed here suggest that a key step in this classification (which we have not been able to resolve) would be the classification of $3$-manifolds that admit such metrics and also admit an exhaustion $\Omega_1\subset\Omega_2\subset\dots$, $M = \cup_{i=1}^\infty \Omega_i$, with each $\Omega_i$ a solid torus. As a concrete example, suppose that $\Omega_i$ embeds in $\Omega_{i+1}$ as a trefoil knot (see Figure \ref{fig:trefoil}); we do not know if $M$ admits a complete metric of non-negative scalar curvature and bounded geometry.

\subsection{Acknowledgements}  
O.C.~was supported by a Terman Fellowship and an NSF
grant (DMS-2304432). Y.L.~is supported by NSF grant DMS-2203310. 
The authors wish to thank Richard Bamler, Hubert Bray, Marcus Khuri, Chao Li, John Lott for their interest. Finally, we thank the hospitality of Simons Laufer Mathematical Institute, where a major part of this work was completed during the authors' visits.

\section{Topological preliminaries}

In the following, we assume that all manifolds are noncompact without boundary.

\begin{lem}\label{lem:contract-ends}
Assume $M$ is contractible and $\dim M \geq 2$. Then $M$ has only one end.
\end{lem}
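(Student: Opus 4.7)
My plan is to argue by contradiction, producing a non-trivial class in compactly supported cohomology and then killing it by Poincar\'e--Lefschetz duality. Suppose $M$ had at least two ends. Then by definition there exists a compact set $K\subset M$ such that $M\setminus K$ has two distinct unbounded connected components $U_1$ and $U_2$. Using a partition of unity, I would first construct a smooth function $f\colon M\to[0,1]$ that equals $0$ outside a compact set inside $U_1$, and equals $1$ outside a compact set inside $U_2$. The exterior derivative $df$ is then a smooth 1-form with compact support, and defines a class $[df]\in H^1_c(M;\R)$.

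The core claim is that $[df]\neq 0$. Indeed, if $df=dg$ for some compactly supported smooth $g$, then $f-g$ would be locally constant, hence constant on the connected manifold $M$; but $f-g$ equals $0$ on the unbounded part of $U_1$ and $1$ on the unbounded part of $U_2$, a contradiction. Hence $H^1_c(M;\R)\neq 0$.

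To conclude, I would use that contractibility of $M$ implies $\pi_1(M)=0$ and therefore $M$ is orientable, so Poincar\'e--Lefschetz duality gives $H^1_c(M;\R)\cong H_{n-1}(M;\R)$, where $n=\dim M\geq 2$. Since $M$ is contractible and $n-1\geq 1$, this group vanishes, contradicting the previous paragraph. Thus $M$ has at most one end, and since $M$ is connected and noncompact it has exactly one.

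The step I expect to require a little care is the construction of the smooth separating function $f$: one should first enlarge $K$ to a compact manifold-with-boundary whose complement is still $U_1\sqcup U_2\sqcup(\text{bounded pieces})$, and then glue the constants $0$ and $1$ together via a collar and a partition of unity to ensure $df$ has compact support. Everything else is immediate from standard facts (the de~Rham theorem, Poincar\'e--Lefschetz duality, and the vanishing of reduced homology of a contractible space). An alternative formalism would be the long exact sequence $\cdots\to H^0_c(M)\to H^0(M)\to H^0(\mathrm{Ends}(M))\to H^1_c(M)\to\cdots$, which directly identifies the number of ends with $1+\dim\ker(H^1_c(M)\to H^1(M))$, yielding the same conclusion.
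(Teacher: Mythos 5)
Your proof is correct and is essentially the paper's argument in Poincar\'e-dual form. The paper fixes a compact $K$ with $M\setminus K$ having two unbounded components, takes $E$ to be one of them, and asserts directly that $[\partial E]\neq 0$ in $H_{n-1}(M;\mathbb{Z})$ (because both $E$ and $M\setminus\bar E$ are unbounded), which contradicts $H_{n-1}(M;\mathbb{Z})=0$ for contractible $M$. You instead build the separating function $f$ and exhibit the dual nonzero class $[df]\in H^1_c(M;\R)$, then convert to $H_{n-1}(M;\R)$ via Poincar\'e--Lefschetz duality (using orientability from $\pi_1(M)=0$). The two are the same idea: your route makes explicit the duality step that the paper's one-line assertion ``$[\partial E]\neq0$'' leaves implicit, at the mild cost of invoking orientability and de Rham/duality machinery, whereas the paper's version stays entirely in integral homology and is more terse. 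Both hinge on $n-1\geq1$ so that the relevant (co)homology group vanishes for contractible $M$; your construction of $f$ (enlarge $K$ to a smooth compact domain and glue via a collar) is the right way to make $df$ compactly supported.
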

\begin{proof}
    If $M$ has more than one end, then we can find a smooth domain $K\Subset M$ such that $M\setminus K$ has more than one unbounded components. Let $E$ be an unbounded component of $M\setminus K$, then $M\setminus \bar E$ is also unbounded, hence $[\partial E] \neq 0 \in H_{n-1}(M;\mathbb{Z})$. This is impossible since $M$ is contractible.
\end{proof}

\begin{lem}\label{lem:contract-irred}
    Assume $M$ is a contractible 3-manifold. Then $M$ is irreducible.  
\end{lem}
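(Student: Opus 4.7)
The plan is to show directly that every embedded $2$-sphere $S\subset M$ bounds a $3$-ball. This is the definition of irreducibility, and the argument breaks naturally into a homological step, a topological step on the two sides of $S$, and an application of the Poincaré conjecture.

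First, I would use the fact that $M$ is contractible, hence $H_1(M;\mathbb{Z})=0$, together with Poincaré-Lefschetz duality in a (possibly noncompact) orientable $3$-manifold to conclude that every embedded orientable closed hypersurface separates. So for any embedded $2$-sphere $S\subset M$, we have $M\setminus S=A\sqcup B$ with $A,B$ open and nonempty. Next I would invoke Lemma \ref{lem:contract-ends}: since $M$ is contractible and $\dim M=3\geq 2$, $M$ has exactly one end, so exactly one of $A,B$, say $A$, has compact closure. Then $\bar A$ is a compact $3$-manifold with $\partial\bar A=S\cong S^2$.

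The second step is to show that $\bar A$ is simply connected. Thicken $S$ to an open collar $N\cong S^2\times(-1,1)$ in $M$, and set $U=A\cup N$ and $V=B\cup N$. Then $U$ deformation retracts to $\bar A$, $V$ deformation retracts to $\bar B$, and $U\cap V\simeq S^2$ is simply connected. Van Kampen's theorem yields
\[
    1=\pi_1(M)=\pi_1(\bar A)*\pi_1(\bar B),
\]
and hence $\pi_1(\bar A)=1$.

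Finally, cap off $\bar A$ along its boundary $S^2$ by gluing in a $3$-ball $D^3$ to obtain a closed $3$-manifold $\widehat M=\bar A\cup_{S^2}D^3$. By Van Kampen again, $\pi_1(\widehat M)=1$, so $\widehat M$ is a simply connected closed $3$-manifold. By the Poincaré conjecture (Perelman), $\widehat M\cong S^3$, and therefore $\bar A\cong S^3\setminus\operatorname{int}(D^3)\cong D^3$, i.e., $S$ bounds a $3$-ball in $M$. Since $S$ was arbitrary, $M$ is irreducible.

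The main conceptual step is the separation/one-end argument that singles out a compact side $\bar A$; after that, the Van Kampen computation is routine, and the final identification of $\bar A$ with $D^3$ is a direct appeal to the Poincaré conjecture. I do not see a way to avoid the Poincaré conjecture here: even with $\pi_1(\bar A)=1$ and $\partial\bar A=S^2$, classifying simply connected compact $3$-manifolds with $2$-sphere boundary is essentially equivalent to it.
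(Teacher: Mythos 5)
Your proof is correct and follows the same strategy as the paper's: observe that a $2$-sphere bounds a precompact region, apply Van Kampen to see that region is simply connected, and invoke the Poincar\'e conjecture. You merely spell out two points the paper leaves implicit — why the sphere separates (via $H_1(M)=0$) and why one side is precompact (via one-endedness, Lemma~\ref{lem:contract-ends}) — but the mathematical content is identical.
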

\begin{proof}
    A smoothly embedded $2$-sphere in $M$ must bound a precompact region $\Omega$. By Van Kampen's theorem we have $\pi_1(M)=\pi_1(\Omega)*\pi_1(M\setminus\Omega)=1$, and thus $\pi_1(\Omega)=1$. So the Poincar\'e conjecture \cite{Pel1,Pel2} implies  that $\Omega$ is a ball.
\end{proof}

In Lemma \ref{lem:contractS2}, \ref{lem:contractT2} below, we suppose that $M$ is a contractible 3-manifold, and that $\Omega_1\Subset\Omega_2\Subset\dots\Subset M$ with $\bigcup_{i}\Omega_i = M$ is an exhaustion of $M$, with each $\p\Omega_i$ being smooth and connected.

\begin{lem}\label{lem:contractS2}
    If $\partial\Omega_i\cong \mathbb S^2$ for all $i=1,2,\dots$, then $M\cong\mathbb{R}^3$.
\end{lem}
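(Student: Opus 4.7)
The plan is to show that each $\Omega_i$ is itself a 3-ball, and then invoke the classical monotone union theorem. First I would use Lemma \ref{lem:contract-irred}, which says $M$ is irreducible, meaning every smoothly embedded $2$-sphere in $M$ bounds a smooth $3$-ball. Applying this to $\partial\Omega_i \cong \mathbb{S}^2$, I get some smooth closed $3$-ball $B_i\subset M$ with $\partial B_i=\partial\Omega_i$. Since $\partial\Omega_i$ is connected and separates the (connected) manifold $M$ into exactly the two pieces $\Omega_i$ and $M\setminus\bar\Omega_i$, and since $B_i$ is compact while $M\setminus\bar\Omega_i$ is noncompact (because $M$ is noncompact and $\Omega_i\Subset M$), the only possibility is $B_i=\bar\Omega_i$. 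Hence each $\Omega_i$ is an open smooth $3$-ball.

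Next I would conclude from the chain $\Omega_1\Subset\Omega_2\Subset\cdots$ with $\bigcup_i\Omega_i=M$ that $M$ is a monotone union of open $3$-balls. By M. Brown's monotone union theorem, any such increasing union of open topological $n$-balls (with each contained in the interior of the next) is homeomorphic to $\mathbb{R}^n$; in our setting $n=3$, so $M$ is homeomorphic to $\mathbb{R}^3$. To upgrade to a diffeomorphism, I would use the fact that in dimension $3$ every topological manifold carries a unique smooth structure (Moise, plus smoothing of homeomorphisms between smooth $3$-manifolds), which yields $M\cong\mathbb{R}^3$ smoothly.

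The steps are all standard, and I do not anticipate a serious obstacle. The one point that requires slight care is verifying that the $3$-ball supplied by irreducibility is indeed $\bar\Omega_i$ rather than its complement, which is handled cleanly by the precompactness of $\Omega_i$ together with the noncompactness of $M$. The upgrade from topological to smooth is also routine in dimension $3$ but is the only place where one must cite something beyond the Poincar\'e conjecture already used in Lemma \ref{lem:contract-irred}.
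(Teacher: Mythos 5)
Your proof is correct, and it starts the same way as the paper's: both use Lemma~\ref{lem:contract-irred} (and hence the Poincar\'e conjecture) to conclude that each $\Omega_i$ is a smooth $3$-ball. The difference is in how you pass from ``increasing union of $3$-balls'' to ``$\cong\mathbb{R}^3$.'' You invoke M.~Brown's monotone union theorem, which gives a \emph{homeomorphism} $M\approx\mathbb{R}^3$, and then upgrade to a diffeomorphism via Moise's uniqueness of smooth structures in dimension~$3$. The paper instead builds the diffeomorphism directly: citing~\cite{HP} but sketching the argument, it caps off $\Omega_{i+1}$ with a $3$-ball, applies Alexander's theorem (there is only one smooth $S^2$ up to isotopy in $S^3$) to identify each annular region $\overline{\Omega_{i+1}}\setminus\overline{\Omega_i}$ with $S^2\times[0,1]$, and then assembles these product collars to get $M\setminus\Omega_1\cong S^2\times[1,\infty)$. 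Both routes are standard; the paper's stays entirely in the smooth category and so avoids the topological-to-smooth upgrade, at the cost of the small combinatorial step of matching the collar identifications along the common boundary spheres. Your route is shorter to state but leans on a second deep input (Moise/Munkres/Bing) beyond what was already used for Lemma~\ref{lem:contract-irred}. One small remark: your care in verifying that the ball provided by irreducibility is $\overline{\Omega_i}$ rather than its complement can be bypassed by running the Lemma~\ref{lem:contract-irred} argument directly on the precompact side $\Omega_i$ (Van Kampen gives $\pi_1(\Omega_i)=1$, cap with a $3$-ball, apply Poincar\'e), which is what the paper's phrase ``as in Lemma~\ref{lem:contract-irred}'' means.
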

\begin{proof}
    As in Lemma \ref{lem:contract-irred}, each $\Omega_i$ is a $3$-ball. Thus $M$ is an increasing union of smoothly embedded 3-balls. Thus, $M$ is diffeomorphic to $\R^3$. This final step follows from \cite[Theorem 1]{HP} but since the present situation is much simpler, we sketch the proof here. Capping off $\Omega_{i+1}$ with a $3$-ball we can use Alexander's theorem to see that $\overline{\Omega_{i+1}}\setminus\overline{\Omega_i} \cong S^2 \times [0,1]$. We can assemble these diffeomorphisms together to show that $M\setminus \Omega_1 \cong S^2 \times [1,\infty)$. This completes the proof.  
\end{proof}

The next result essentially follows from the work of J.\ Wang \cite{Wang1} (see also \cite{Wang2}), but we need a slightly stronger result than the one stated there. 
\begin{lem}\label{lem:contractT2}
    If $\partial\Omega_i \cong \mathbb T^2$ for all $i=1,2,\dots$, and $M$ admits a complete metric of positive scalar curvature, then $M\cong \mathbb{R}^3$. 
\end{lem}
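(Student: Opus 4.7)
The plan is to show that the given exhaustion by torus-bounded domains is in fact an exhaustion by solid tori, after which the conclusion follows directly from J.\ Wang's theorem \cite{Wang1, Wang2}. The topological inputs are simple connectivity of $M$, irreducibility (Lemma \ref{lem:contract-irred}), and the loop theorem (Dehn's lemma).

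Fix $i$ and set $T_i = \partial \Omega_i \cong \mathbb{T}^2$. Since $\pi_1(M) = 1$, both generators of $\pi_1(T_i) \cong \Z^2$ are null-homotopic in $M$; by compactness of the null-homotopies, they are already null-homotopic in some larger $\Omega_j$. Hence $\ker\bigl(\pi_1(T_i) \to \pi_1(\Omega_j)\bigr)$ is non-trivial, and the loop theorem produces an embedded compression disk $D \subset \Omega_j$ with $\partial D$ essential on $T_i$. An innermost-circle argument on $D \cap T_i$ lets me arrange $D \cap T_i = \partial D$, so the interior of $D$ lies on a single side of $T_i$ in $\Omega_j$.

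If $D \subset \Omega_i$, compressing $\Omega_i$ along $D$ replaces the torus boundary by an $\mathbb{S}^2$; by irreducibility this sphere bounds a $3$-ball, and undoing the compression (attaching a $1$-handle to that ball) exhibits $\Omega_i$ as a solid torus. If instead $D \subset \overline{\Omega_j \setminus \Omega_i}$, let $N(D)$ be a thin tubular neighborhood and form $X = \Omega_i \cup N(D)$; the $2$-handle attachment along $\partial D$ surgers $T_i$ to $\partial X \cong \mathbb{S}^2$. Irreducibility forces this sphere to bound a ball in $M$, and compactness of $X$ together with non-compactness of $M$ forces $X$ itself to be that ball. Then $\Omega_i = X \setminus \operatorname{int}(N(D))$ is a $3$-ball with a ``tube'' (a sub-$3$-ball meeting $\partial X$ in two disjoint disks) drilled out, which is again a solid torus. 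In both cases $\Omega_i$ is a solid torus, and Wang's theorem applied to the upgraded exhaustion gives $M \cong \R^3$.

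The main subtlety is purely topological and lies in the innermost-disk step: the compression disk in $\Omega_j$ produced by the loop theorem may transversely meet the intermediate tori $T_k$ for $i < k < j$, but since cleanness is only needed with respect to $T_i$, standard cut-and-paste on $D \cap T_i$ suffices and no intermediate bookkeeping is required. The scalar curvature hypothesis enters only at the very end via Wang's theorem, which handles the passage from ``contractible with exhaustion by solid tori and complete PSC metric'' to ``$M \cong \R^3$''.
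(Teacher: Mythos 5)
Your first case (compression disk $D \subset \Omega_i$) is handled correctly: removing $N(D)$ yields a compact domain with sphere boundary, irreducibility makes it a ball, and re-attaching $N(D)$ as a $1$-handle to a ball always gives a solid torus. The second case has a genuine gap. From $X = \Omega_i \cup N(D) \cong B^3$ you assert that $\Omega_i = X \setminus \operatorname{int} N(D)$, ``a $3$-ball with a tube drilled out,'' is a solid torus, but this is false when the tube is knotted. If the core arc $\alpha$ of $N(D)$ is a knotted arc properly embedded in $X \cong B^3$, then $X \setminus N(\alpha)$ is (a punctured) nontrivial knot exterior with non-abelian $\pi_1$, hence not a solid torus, even though $\partial\Omega_i \cong \mathbb T^2$ is compressible from outside via the meridian disk of the tube. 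This configuration is compatible with all hypotheses of the lemma, so you cannot conclude that the $\Omega_i$ are solid tori. The paper avoids the problem by not trying to upgrade $\Omega_i$ in this case: it keeps the sphere-bounded region $\Omega_i' := \Omega_i \cup N(D)$ itself, notes that if this alternative occurs for infinitely many $i$ one obtains a spherical exhaustion, and closes that branch with Lemma \ref{lem:contractS2}.

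There is a secondary imprecision in the last step even when all the $\Omega_i$ genuinely are solid tori. Wang's Theorem~1.2 in \cite{Wang1} concerns \emph{contractible genus-one manifolds} in the sense of \cite[Definition~2.14]{Wang1}, which requires, in addition to the solid-torus exhaustion, that $\partial\Omega_{i+1}$ be incompressible in $\Omega_{i+1}\setminus\Omega_i$ for all $i$. So a further dichotomy is needed before citing Wang: if $\partial\Omega_{i+1}$ is compressible in $\Omega_{i+1}\setminus\Omega_i$ infinitely often, one surgers to spherical boundaries and invokes Lemma \ref{lem:contractS2}; otherwise the exhaustion exhibits $M$ as a contractible genus-one manifold and Wang's theorem contradicts the PSC hypothesis, giving the result by contraposition. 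Your proof collapses both branches into a single citation of Wang stated as ``solid torus exhaustion $+$ PSC $\Rightarrow \R^3$,'' which is not quite what Theorem 1.2 says without the incompressibility hypothesis being checked.
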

\begin{proof}
    Since $\pi_1(M)=1$, the map $\pi_1(\p\Omega_i)\to\pi_1(M)$ is not injective, and thus the loop theorem implies that $\p\Omega_i$ is compressible.
    
    We claim we can find a compressing disk $D$ of $\p\Omega_i$ such that $D\subset \Omega_i$ or $D\subset M\setminus\Omega_i$. Let $D$ be a compressing disk. We may assume $D$ is transverse to $\p\Omega_i$, thus $D\cap\p\Omega_i$ consists of finitely many disjoint circles. Let $\gamma$ be an innermost circle in the intersection. We may assume that $\gamma$ is homotopically nontrivial in $\p\Omega_i$: otherwise, it bounds a disk $D_1\subset \p\Omega_i$ and also a disk $D_2\subset \Omega_i$ or $D_2\subset M\setminus\Omega_i$, thus we can replace $D_2$ by $D_1$ and push it slightly outwards or inwards to remove $\gamma$ from the intersection. Through this reduction process, we obtain a homotopically non-trivial circle $\gamma\subset\p\Omega_i$, with a compressing disk $D'\subset\Omega_i$ or $D'\subset M\setminus\Omega_i$. The disk $D'$ proves our claim.

    If $D\subset M\setminus\Omega_i$ for some $i$, then by doing surgery along $D$, we can find another $\Omega_i'\supset\Omega_i$ with $\partial\Omega_i'\cong \mathbb S^2$. If this occurs for infinitely many $i$, we can then pass to a subsequence so that Lemma \ref{lem:contractS2} applies. As such, it suffices to assume that $D\subset \Omega_i$ for all $i$. Then the surgery of $\p\Omega_i$ along $D$ is a sphere and thus (by Lemma \ref{lem:contract-irred}) bounds a ball $B$ in $M$. This implies that $\Omega_i$ is a solid torus. 

    Suppose that for infinitely many $i$, the torus $\partial\Omega_{i+1}$ is compressible in $\Omega_{i+1}\setminus \Omega_i$. Then, by surgery along the compressing disk, we can again find an exhaustion by regions with spherical boundary, and thus Lemma \ref{lem:contractS2} applies. As such, we can assume that $\p\Omega_{i+1}$ is not compressible in $\Omega_{i+1}\setminus\Omega_i$ for all $i$. This implies that $M$ is a contractible genus-one manifold in the sense of \cite[Definition 2.14]{Wang1}. Such an $M$ cannot admit a complete metric of positive scalar curvature by \cite[Theorem 1.2]{Wang1}. This completes the proof.
\end{proof}

The following two lemmas concern the connectedness of boundaries.

\begin{lem}\label{lem:connectedness}
    Suppose $M$ is one-ended, and for all $K\Subset M$ the map $H_1(M\setminus K,\mathbb Z)\to H_1(M,\Z)$ is surjective. If $E\Subset M$ is a connected $C^1$ domain such that $M\setminus E$ does not have bounded connected components, then $\p E$ is connected.
\end{lem}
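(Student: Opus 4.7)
My plan has two main steps. First, I would show that $M\setminus E$ is connected. By hypothesis every component of $M\setminus E$ is unbounded, hence noncompact as a topological space. Any such component is therefore a connected open subset of $M$ that carries at least one topological end, and the inclusion into $M$ sends that end to a corresponding end of $M$; distinct disjoint components send their ends to distinct ends of $M$. Since $M$ has exactly one end, $M\setminus E$ must have exactly one component. Using the collar neighborhood of $\partial E$, the same holds for $M\setminus\bar E$.

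Next, I would apply Mayer--Vietoris to $A\cup B = M$, where $A$ is an open neighborhood of $\bar E$ that deformation retracts onto it (and $\bar E$ is homotopy equivalent to $E$) and $B = M\setminus\bar E$, so that $A\cap B$ retracts onto $\partial E$. Writing $\partial E = \Sigma_1\sqcup\cdots\sqcup\Sigma_k$, the relevant portion of the exact sequence is
\[
H_1(E)\oplus H_1(M\setminus\bar E)\longrightarrow H_1(M)\longrightarrow\tilde H_0(\partial E) = \mathbb Z^{k-1}\longrightarrow 0,
\]
using that $E$ and $M\setminus\bar E$ are both connected. The cokernel of the left-most map is therefore $\mathbb Z^{k-1}$.

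Finally, I would invoke the surjectivity hypothesis. Take $K\Subset M$ to be a precompact open neighborhood of $\bar E$. By hypothesis $H_1(M\setminus K;\mathbb Z)\to H_1(M;\mathbb Z)$ is surjective, and it factors as $H_1(M\setminus K)\to H_1(M\setminus\bar E)\to H_1(M)$, so the second arrow is also surjective. Consequently the left-most map in the Mayer--Vietoris sequence above is surjective, forcing $\mathbb Z^{k-1} = 0$. Hence $k = 1$, i.e., $\partial E$ is connected.

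The main delicacy I anticipate is the first step: the one-endedness hypothesis must be leveraged to rule out multiple unbounded components of $M\setminus E$, and since the number of components could a priori be infinite, the end-theoretic viewpoint (each unbounded open connected piece furnishes at least one end of $M$) is the cleanest tool for bypassing metric estimates on component diameters. The rest of the argument is a routine application of the Mayer--Vietoris sequence together with the surjectivity hypothesis.
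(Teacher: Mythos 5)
Your proof is correct, and the overall strategy mirrors the paper's: first deduce from one-endedness that $M\setminus E$ is connected, then invoke the surjectivity hypothesis to rule out a disconnected $\partial E$. Where you differ is in the realization of the second step. The paper argues via intersection numbers: since $E$ and $M\setminus E$ are both connected, any additional component $\Sigma$ of $\partial E$ would be non-separating, hence there is a loop $\gamma$ with nonzero algebraic intersection with $\Sigma$; but the hypothesis lets one homotope (homologically) $\gamma$ to a loop disjoint from $\partial E$, giving intersection number zero. You instead run a Mayer--Vietoris sequence for $M = A\cup B$ with $A$ a collar neighborhood of $\bar E$ and $B=M\setminus\bar E$, identify the cokernel of $H_1(E)\oplus H_1(M\setminus\bar E)\to H_1(M)$ with $\tilde H_0(\partial E)\cong\mathbb Z^{k-1}$, and kill it by the surjectivity of $H_1(M\setminus\bar E)\to H_1(M)$. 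The two computations are Poincar\'e--Lefschetz dual to one another. The paper's version is shorter and more geometric; yours is more systematic, makes the role of the surjectivity hypothesis fully transparent, and avoids any explicit appeal to a dual loop (and hence to orientability/two-sidedness considerations). Both are sound; yours is a reasonable, slightly more algebraic alternative to the paper's one-line intersection argument.
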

\begin{proof}
    Since $M$ is one-ended, $M\setminus E$ must be connected. If $\p E$ is disconnected, then there is a component of $\p E$ that has nonzero algebraic intersection with a closed loop $\gamma$. However, by the lemma's condition, $\gamma$ is homologous to a loop that is disjoint from $\p E$. This leads to a contradiction.
\end{proof}

\begin{lem}\label{lem:connectedness2}
    Suppose $M$ is one-ended with $b_1(M)<\infty$. Then there is a compact set $S$ with the following property: if $E\Subset M$ is a connected $C^1$ domain such that $M\setminus E$ does not have bounded connected components, then
    \begin{enumerate}[label={(\roman*)}, nosep]
        \item either $\p E$ is connected,
        \item or every connected component of $\p E$ intersects $S$.
    \end{enumerate}
\end{lem}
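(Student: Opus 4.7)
The plan is to strengthen the argument of Lemma \ref{lem:connectedness} by choosing $S$ to contain a fixed finite collection of loops spanning the rational first homology of $M$. Since $b_1(M)<\infty$, the space $H_1(M;\mathbb{Q})$ is finite-dimensional, so I pick finitely many loops $\gamma_1,\dots,\gamma_k\subset M$ whose classes span $H_1(M;\mathbb{Q})$, and let $S\Subset M$ be any compact set containing $\gamma_1\cup\dots\cup\gamma_k$. I claim this $S$ works.

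Suppose $E\Subset M$ satisfies the hypotheses but both (i) and (ii) fail: $\partial E$ has components $\Sigma_1,\dots,\Sigma_m$ with $m\ge 2$, and some $\Sigma_j$ is disjoint from $S$. As in the proof of Lemma \ref{lem:connectedness}, the one-endedness of $M$ combined with the assumption on $M\setminus E$ forces $M\setminus E$ to be connected. Using the collar $\Sigma_j\times(-\epsilon,\epsilon)$ of $\partial E$, pick points $p_j^-\in E$ and $p_j^+\in M\setminus E$ on opposite sides of $\Sigma_j$, and similarly $p_i^\pm$ near a different component $\Sigma_i$ (this is where $m\ge 2$ is used). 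Connect $p_j^-$ to $p_i^-$ by a path in $E$ and $p_i^+$ to $p_j^+$ by a path in $M\setminus E$; together with the short transverse arcs across $\Sigma_j$ and $\Sigma_i$ these close up to a loop $\gamma\subset M$ with algebraic intersection numbers $\gamma\cdot\Sigma_j=\pm 1$ and $\gamma\cdot\Sigma_i=\pm 1$ (and zero intersection with the other components, by construction).

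Now each $\Sigma_\ell$ is a compact oriented $C^1$ hypersurface without boundary, so algebraic intersection with $\Sigma_j$ defines a homomorphism $H_1(M;\mathbb{Z})\to\mathbb{Z}$ which extends $\mathbb{Q}$-linearly to $H_1(M;\mathbb{Q})\to\mathbb{Q}$. Since each $\gamma_i$ lies in $S$ and $S\cap\Sigma_j=\emptyset$, we have $\gamma_i\cdot\Sigma_j=0$ for every $i$. Thus the homomorphism vanishes on a spanning set of $H_1(M;\mathbb{Q})$ and so vanishes identically on $H_1(M;\mathbb{Q})$. In particular $\gamma\cdot\Sigma_j=0$, contradicting $\gamma\cdot\Sigma_j=\pm 1$.

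The main technical point to be careful about is the construction of the loop $\gamma$: one needs transverse crossings of exactly $\Sigma_j$ and $\Sigma_i$ (and nothing else), which is arranged by working entirely inside the collar near those two components together with interior paths in the connected regions $E$ and $M\setminus E$. Using $\mathbb{Q}$ coefficients sidesteps any possible torsion issue in $H_1(M;\mathbb{Z})$, and a nonzero $\mathbb{Z}$-intersection remains nonzero in $\mathbb{Q}$, so the contradiction is valid regardless of whether $H_1(M;\mathbb{Z})$ is itself finitely generated.
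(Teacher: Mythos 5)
Your proof is correct and follows essentially the same approach as the paper: choose $S$ to contain finitely many loops spanning $H_1(M;\mathbb{Q})$ (the paper phrases this as generators of $H_1(M;\mathbb{Z})$ modulo torsion), and derive a contradiction from the vanishing of the intersection pairing of a disconnected-boundary component disjoint from $S$ against this spanning set. The only difference is that you spell out explicitly, via the two-collar construction across $\Sigma_j$ and $\Sigma_i$, why a non-separating boundary component has nonzero algebraic intersection with some loop, whereas the paper invokes this as a known fact.
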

\begin{proof}
    Let $\{\gamma_i\}$ be a finite collection of loops that generates $H_1(M,\mathbb Z)$ modulo torsion. Choose any compact set $S\Supset\bigcup\gamma_i$. We claim that $S$ satisfies the lemma's condition. Let $E$ be as stated in the lemma. Since $M$ is one-ended, $M\setminus E$ must be connected and noncompact. Also, recall that $E$ is connected. So if $\p E$ is disconnected, each connected component of $\p E$ must be non-separating. Let $\Sigma$ be any component of $\p E$; thus $\Sigma$ has nonzero algebraic intersection with some loop $\sigma$. Since $\sigma$ is homologous to a linear combination of $\gamma_i$ modulo torsion, and $\gamma_i\subset S$ have zero intersection numbers with $\Sigma$, it follows that the $\sigma$ has zero intersection numbers with $\Sigma$, which is a contradiction.
\end{proof}

\begin{lem}\label{lemma:handlebody_exhaustion}
    Let $M^3$ be an open handlebody. Let $\Omega_1\Subset\Omega_2\Subset\cdots\Subset M$ be a $C^1$ exhaustion of $M$, such that each $\Omega_i$ and $M\setminus\Omega_i$ is connected. Then $\textnormal{genus}(\p \Omega_i)\ge \textnormal{genus}(M)$ for all large $i$.
\end{lem}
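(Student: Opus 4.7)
The plan is to deduce the genus bound purely from algebraic topology, via three ingredients: a lower bound $b_1(\Omega_i) \geq \gamma$ for large $i$, the vanishing $b_2(\Omega_i) = 0$, and the standard Euler characteristic identity for compact orientable $3$-manifolds with boundary.

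For the two Betti number inputs I would proceed as follows. The open handlebody $M$ deformation retracts onto a compact wedge of $\gamma$ circles $\Gamma \subset M$ (a spine). Since $\{\Omega_i\}$ exhausts $M$, for all sufficiently large $i$ we have $\Gamma \subset \Omega_i$. The inclusion $\Gamma \hookrightarrow M$ is a homotopy equivalence, so factoring through $\Omega_i$ forces $H_1(\Omega_i;\Z) \twoheadrightarrow H_1(M;\Z) = \Z^\gamma$, giving $b_1(\Omega_i) \geq \gamma$. For the Euler characteristic identity, for any compact connected oriented $3$-manifold $\Omega$ whose boundary has $k$ components of total genus $g$, the double argument (or Poincaré--Lefschetz duality) gives $\chi(\Omega) = \chi(\partial\Omega)/2 = k - g$, while $\chi(\Omega) = 1 - b_1(\Omega) + b_2(\Omega)$ because $b_3(\Omega)=0$. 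Equating yields
$$g \;=\; k - 1 + b_1(\Omega) - b_2(\Omega).$$
Applied to $\Omega_i$ with $k_i \geq 1$, combined with $b_2(\Omega_i) = 0$, this gives $\textnormal{genus}(\partial \Omega_i) \geq \gamma$.

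The only delicate step is therefore $b_2(\Omega_i) = 0$, and here the hypothesis that $M\setminus\Omega_i$ be connected enters essentially: without it, $\Omega_i$ could enclose a bounded pocket in $M$ generating a class in $H_2$. To exploit the hypothesis I would realize $M$ as the interior of a closed genus-$\gamma$ handlebody $H_\gamma \subset S^3$, so that $S^3 = H_\gamma \cup N_\gamma$ with complementary closed handlebody $N_\gamma$ and common boundary $\Sigma_\gamma := H_\gamma \cap N_\gamma$ a genus-$\gamma$ surface. Because $\Omega_i \Subset M$, the closure $\overline{\Omega_i}$ is disjoint from $\Sigma_\gamma$; hence a small tubular neighborhood of $\Sigma_\gamma$ in $S^3$ lies entirely in $S^3 \setminus \Omega_i$ and meets both the connected set $M \setminus \Omega_i$ (connected by hypothesis) and the connected interior $N_\gamma^\circ$. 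Therefore
$$S^3 \setminus \Omega_i \;=\; (M \setminus \Omega_i) \cup \Sigma_\gamma \cup N_\gamma^\circ$$
is connected, and Alexander duality in $S^3$ yields $b_2(\Omega_i) = \dim_{\mathbb Q}\check H^2(\Omega_i;\mathbb Q) = \dim_{\mathbb Q}\tilde H_0(S^3\setminus\Omega_i;\mathbb Q) = 0$, completing the argument.
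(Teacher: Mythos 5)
Your argument is correct and genuinely different from the paper's. The paper's proof is geometric/surface-theoretic: it first shows $\partial\Omega_i$ is connected (via Lemma~\ref{lem:connectedness}), then observes that $\partial\Omega_i$ separates the two ends of a collar of $\partial\overline{M}$, so the projection $\partial\Omega_i\to\partial\overline{M}$ has nonzero degree, and finally invokes Kneser's theorem to conclude $\text{genus}(\partial\Omega_i)\ge\gamma$. You instead work purely homologically: $b_1(\overline{\Omega_i})\ge\gamma$ from the spine, $b_2(\overline{\Omega_i})=0$ from Alexander duality in $S^3$ (using the embeddability of the handlebody and the connectedness of $M\setminus\Omega_i$), and the ``half lives, half dies'' identity $\chi(\overline{\Omega_i})=\tfrac12\chi(\partial\Omega_i)$. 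Your route has the advantage of not needing Kneser's theorem or any discussion of degree, while the paper's route avoids the detour through an embedding into $S^3$. Two small points. First, Alexander duality should be applied to the compact set $\overline{\Omega_i}$, not the open set $\Omega_i$, and one should note that $M\setminus\overline{\Omega_i}$ is still connected (it is $M\setminus\Omega_i$ with a boundary collar deleted), so $S^3\setminus\overline{\Omega_i}$ is connected. Second, your identity $g=k-1+b_1-b_2\ge\gamma$ bounds the \emph{total} genus of $\partial\Omega_i$, which is what you want only when $\partial\Omega_i$ is connected ($k=1$); if $k\ge2$ and $\gamma\ge3$, one could in principle have total genus $\ge k-1+\gamma$ with every component of genus $<\gamma$. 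So to get the lemma's conclusion as stated you should, like the paper, first verify $k=1$ --- which follows from Lemma~\ref{lem:connectedness} since for an open handlebody the map $H_1(M\setminus K;\Z)\to H_1(M;\Z)$ is surjective for all $K\Subset M$. With that addition your proof is complete; in the paper's actual application the boundaries are already known to be connected, so the gap is harmless there.
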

\begin{proof}
    We choose $U\cong \Sigma \times [0,1]$ to be a collar neighborhood of $\partial M \cong \Sigma \times \{1\}$. For $i$ sufficiently large, it holds $\Sigma\times \{0\}\subset \Omega_i \cap U$. For all large $i$ we have
    \begin{enumerate}[label={(\arabic*)}, nosep]
        \item $\p\Omega_i$ is connected (by Lemma \ref{lem:connectedness}),
        \item $\p\Omega_i$ separates $\Sigma\times \{0\}$ and $\Sigma\times \{1\}$.
    \end{enumerate}
    \noindent Hence, the natural projection map $\p\Omega_i\to\p M$ has nonzero degree, and by Kneser's theorem (cf.\ \cite{ryabichev}), we have $\text{genus}(\p\Omega_i)\geq\text{genus}(\p M)$.
\end{proof}

\begin{lem}\label{lemma:one_end_cpt}
    Assume $M$ has only one end, $E\subset M$ is connected, with $E\ne M$, $\p E$ compact, and $M\setminus E$ unbounded. Then $E\Subset M$.
\end{lem}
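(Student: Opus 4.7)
\medskip

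\noindent\textbf{Proof proposal for Lemma \ref{lemma:one_end_cpt}.}

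The plan is to argue by contradiction using the one-endedness of $M$ to eliminate the possibility that $E$ is unbounded. The key topological fact is that since $M$ has only one end, for every sufficiently large compact set $K \subset M$, the complement $M \setminus K$ has exactly one connected component with non-compact closure.

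First, I would choose a compact set $K \subset M$ containing $\p E$ in its interior and large enough that $M \setminus K$ has precisely one component $U$ whose closure is not compact; all other components of $M \setminus K$ are precompact. Since $U$ is connected and disjoint from $\p E$, and $M \setminus \p E$ decomposes as $\operatorname{int}(E) \sqcup (M \setminus \bar E)$, the connected set $U$ is contained entirely in one of these two pieces: either $U \subset \operatorname{int}(E)$ or $U \subset M \setminus \bar E$.

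Next, I would rule out the first possibility. If $U \subset \operatorname{int}(E)$, then $M \setminus E$ would be contained in $K$ together with the finitely many precompact components of $M \setminus K$ other than $U$, and hence $M \setminus E$ would be precompact, contradicting the hypothesis that $M \setminus E$ is unbounded. Therefore $U \subset M \setminus \bar E$.

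Finally, with $U$ lying in the complement of $\bar E$, the set $E$ must be contained in $K$ together with the precompact components of $M \setminus K$ other than $U$; since this union has compact closure, we conclude $E \Subset M$. The only subtlety is setting up the decomposition of $M \setminus K$ correctly and making the case analysis airtight; there is no real analytic obstacle here, just careful bookkeeping with the components of $M \setminus K$ relative to $\p E$.
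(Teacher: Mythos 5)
Your proof is correct and follows essentially the same route as the paper: pick a compact $K\Supset\p E$ so that the unbounded part of $M\setminus K$ is a single connected piece $U$, note that $U$ must lie entirely in $E$ or entirely outside $\overline E$, and rule out the first case by the unboundedness of $M\setminus E$. The only cosmetic difference is that the paper arranges for $M\setminus K$ itself to be connected (so $U=M\setminus K$ and there are no extra bounded components to keep track of), which slightly shortens the bookkeeping; if you keep your version, you should note that $K$ can be taken to be a compact smooth domain, which guarantees $M\setminus K$ has only finitely many components and hence that $M\setminus U$ is indeed compact.
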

\begin{proof}
    Since $M$ is one-ended, there is a connected compact set $K\Supset\p E$ such that $M\setminus K$ is connected and unbounded. Since $E$ is connected and $\p E\cap(M\setminus K)=\emptyset$, we have either $E\cap(M\setminus K)=\emptyset$ or $E\supset M\setminus K$. The first case implies $E\subset K$ hence proves the lemma. The second case implies $M\setminus E\subset K$, which is excluded by the hypotheses of the lemma.
\end{proof}

\section{IMCF Preliminaries}\label{sec:IMCF}

We provide a brief introduction to the weak IMCF. Here we only state results needed in the subsequent sections; we refer the reader to \cite{HI01, Xu24obstacle} for much detailed introduction to this subject. We fix the following notations:
\begin{enumerate}[nosep]
    \item $M$ will always denote a complete, connected, noncompact Riemannian manifold, with dimension $n\leq 7$ (so that minimal surface singularities do not appear).
    \item For a function $u$, we denote $E_t(u)=\{u<t\}$ and $E_t^+(u)=\{u\leq t\}$. When there is no ambiguity, we will write $E_t,E_t^+$ for brevity.
    \item When $u$ is defined in some domain $\Omega$, we view $E_t$ as a subset of $\Omega$, hence a subset of $M$. The perimeter $\Ps{E_t}$ is viewed as the perimeter in $M$, so it contains $\Ps{E_t\cap\Omega}$ as well as $\Ps{E_t\cap\p\Omega}$.
    \item We will use $C(\cdots)$ to denote generic constants which depend on the items in the parentheses. The constant may change from line to line.
\end{enumerate}

A \textit{smooth IMCF} is defined as a smooth function $u$ with non-vanishing gradient, satisfying the equation
\begin{equation}\label{eq-prelim:smooth_IMCF}
    \div\Big(\frac{\D u}{|\D u|}\Big)=|\D u|.
\end{equation}
Note that this is a level set flow: setting $\Sigma_t=\{u=t\}$, one finds that $\{\Sigma_t\}$ is a family of hypersurfaces evolving at the speed of $1/H_t$, where $H_t$ is the mean curvature of $\Sigma_t$. To see this, it can be calculated that $H_t=\div\big(\frac{\D u}{|\D u|}\big)$ while the speed of evolution equals $\frac1{|\D u|}$.

The weak IMCF is defined as a variational form of \eqref{eq-prelim:smooth_IMCF}.

\begin{defn}[weak IMCF and its energy functional]\label{def:weak_sol} {\ }

    Given a domain $\Omega\subset M$. For a function $u\in\Lip_{\loc}(\Omega)$, a set $E$ with locally finite perimeter, and a domain $K\Subset\Omega$, we define the energy functional as
    \[J_u^K(E)=|\p^*E\cap K|-\int_{E\cap K}|\D u|.\]
    where $\p^*E$ is the reduced boundary of $E$ (see \cite{Maggi}).
    
    We say that $u$ is \textit{a weak solution of IMCF in $\Omega$}, if $u\in\Lip_{\loc}(\Omega)$, and each level set $E_t=\{u<t\}$ locally minimizes $J_u$ in the following sense: for any $K\Subset\Omega$ and any domain $F$ with $F\Delta E_t\Subset K$, it holds $J_u^K(E_t)\leq J_u^K(F)$.
\end{defn}

Note that that a smooth IMCF is also a weak IMCF \cite[Lemma 2.3]{HI01}. Let us include a brief proof here: suppose $u$ solves \eqref{eq-prelim:smooth_IMCF} in $\Omega$, $t\in\RR$, $K\Subset\Omega$ and $F$ is a competitor as in Definition \ref{def:weak_sol}. Then
\begin{align}
    J_u^K(F)-J_u^K(E_t) &= \big|\p^*F\cap K\big|-\big|\p^*E_t\cap K\big|-\int(\chi_F-\chi_{E_t})|\D u| \nonumber\\
    &\geq \int_{\p^*F\cap K}\nu_F\cdot\frac{\D u}{|\D u|}-\int_{\p^*E_t\cap K}\nu_{E_t}\cdot\frac{\D u}{|\D u|}-\int(\chi_F-\chi_{E_t})|\D u|, \label{eq:energy_comp}
\end{align}
where we used $\nu_F\cdot\frac{\D u}{|\D u|}\leq1$ and $\nu_{E_t}\cdot\frac{\D u}{|\D u|}=1$. Then applying the divergence theorem and using the IMCF equation \eqref{eq-prelim:smooth_IMCF}, the right side of \eqref{eq:energy_comp} equals zero.

Next, we define the initial value problem for the weak IMCF.

\begin{defn}[initial value problem]\label{def:ivp}
    Given a domain $\Omega\subset M$ and a $C^{1,1}$ domain $E_0\Subset\Omega$. We say that $u$ is a weak solution of IMCF in $\Omega$ with initial value $E_0$, if

    (i) $u\in\Lip_{\loc}(\Omega)$ and $E_0=\{u<0\}$;

    (ii) $u|_{\Omega\setminus\overline{E_0}}$ is a weak solution of IMCF in $\Omega\setminus\overline{E_0}$.
\end{defn}

Note that the specific value of $u$ inside $E_0$ is not important, and a weak IMCF with initial value $E_0$ is not necessarily a weak IMCF in $\Omega$.

\begin{lem}[immediate properties]\label{lemma:ivp_properties}
    For $\Omega,E_0,u$ as in Definition \ref{def:ivp}, we have:

    (i) each $E_t$ ($t>0$) is a $C^{1,\alpha}$ hypersurface;
    
    (ii) each $E_t$ ($t>0$) is locally outward perimeter-minimizing\footnote{We say that a set $E$ is locally outward perimeter-minimizing in a domain $\Omega$, if for all $K\Subset\Omega$ and $F\supset E$ with $F\setminus E\Subset K$, it holds $|\p^* E\cap K|\leq|\p^* F\cap K|$.} in $\Omega$ (in particular, $\Omega\setminus E$ has no precompact connected components);

    (iii) $|\p E_t|=e^{t-s}|\p E_s|\leq e^t|\p E_0|$ for all $0<s<t$, as long as $E_t\Subset\Omega$. If further $E_0$ is locally outward minimizing in $\Omega$, then $|\p E_t|=e^t|\p E_0|$.
\end{lem}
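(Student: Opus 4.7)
I would prove the three assertions in order as standard consequences of the variational minimization in Definition \ref{def:weak_sol} (cf.\ \cite{HI01}).

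For (i), since $u\in\Lip_\loc$, the bulk term $\int|\D u|$ in $J_u^K$ is an $L^\infty$ perturbation of perimeter, so $E_t$ is a Tamanini almost-minimizer of perimeter, and the standard regularity theory yields $C^{1,\alpha}$ regularity of $\p E_t$ in dimensions $n\leq 7$. For (ii), fix $t>0$ and an outward competitor $F\supset E_t$ with $F\setminus E_t\Subset K\Subset\Omega\setminus\overline{E_0}$ (possible since $F\setminus E_t\subset\{u\geq t\}$); then $J_u^K(E_t)\leq J_u^K(F)$ rearranges to
\[
    |\p^*E_t\cap K|\leq |\p^*F\cap K|-\int_{F\setminus E_t}|\D u|\leq|\p^*F\cap K|,
\]
giving outward minimization. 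The parenthetical follows because filling in any precompact component $U$ of $\Omega\setminus E_t$ (i.e.\ taking $F=E_t\cup U$) strictly decreases perimeter, contradicting outward minimization.

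For (iii) and $0<s<t$, the sets $E_s\subset E_t$ differ on a set precompactly contained in $\Omega\setminus\overline{E_0}$, so applying the minimization with $F=E_t$ against $E_s$ and with $F=E_s$ against $E_t$ yields the two opposite inequalities $J_u^K(E_s)\leq J_u^K(E_t)$ and $J_u^K(E_t)\leq J_u^K(E_s)$. Combined with the coarea formula $\int_{E_t\setminus E_s}|\D u|=\int_s^t|\p^*E_r|\,dr$, these collapse into the integral identity
\[
    m(t)=m(s)+\int_s^t m(r)\,dr \quad (0<s<t),
\]
with $m(r):=|\p E_r|$, whose unique solution is $m(t)=e^{t-s}m(s)$. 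For the $s=0$ upper bound, I would use outward minimization of $E_s$ (from (ii)) against a smooth outer tubular neighborhood $F_\epsilon\supset E_0$ with $|\p F_\epsilon|=|\p E_0|+O(\epsilon)$: for $s$ small enough $E_s\subset F_\epsilon$, hence $m(s)\leq |\p E_0|+O(\epsilon)$, so $\limsup_{s\to 0^+}m(s)\leq|\p E_0|$ and therefore $m(t)\leq e^t|\p E_0|$. If $E_0$ is additionally outward minimizing, the reverse bound $m(s)\geq|\p E_0|$ follows by testing $F=E_s$ against $E_0$ in its own outward-minimization property, giving $m(s)\to|\p E_0|$ and the equality $m(t)=e^t|\p E_0|$.

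The main obstacle is the $s=0$ boundary case: Definition \ref{def:ivp} only guarantees the minimization property in $\Omega\setminus\overline{E_0}$, so $E_0$ itself is not a direct test competitor. Both the inequality and its equality case require the squeezing argument above, leveraging the $C^{1,1}$ regularity of $\p E_0$ to construct the outer approximations $F_\epsilon$ (for the upper bound) and the outward-minimization hypothesis on $E_0$ to supply the matching lower bound.
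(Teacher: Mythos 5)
The paper's own proof of this lemma is a pure citation (to \cite{Xu24obstacle} for (i)--(ii) and to \cite[Lemmas 1.4, 1.6]{HI01} for (iii)), so your attempt is the only place where the arguments are actually spelled out. Your arguments for (i) and (ii) are correct and are essentially the ones behind those references: local Lipschitz regularity of $u$ makes $E_t$ a Tamanini almost-minimizer, and dropping the nonnegative term $\int_{F\setminus E_t}|\D u|$ from $J_u^K(E_t)\le J_u^K(F)$ gives outward minimization. Your derivation of the identity $|\p E_t|=e^{t-s}|\p E_s|$ for $0<s<t$ via the two opposite inequalities plus coarea is also the correct reconstruction of \cite[Lemma 1.6]{HI01}.

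There is, however, a genuine gap in your $s\to 0^+$ argument for the bound $|\p E_t|\le e^t|\p E_0|$. You need ``$E_s\subset F_\eps$ for $s$ small enough,'' where $F_\eps$ is a thin outer tubular neighborhood of $E_0$. But $\bigcap_{s>0}E_s=\{u\le 0\}$, and Definition~\ref{def:ivp} only fixes $\{u<0\}=E_0$; the set $\{u\le 0\}$ can be strictly larger than $\overline{E_0}$. Indeed, weak IMCF generically jumps at time $0^+$ to the outward-minimizing hull of $E_0$ in $\Omega$ (this jump phenomenon is the whole point of Lemmas~\ref{lemma:non_trivial} and \ref{l: three cases} and Figure~\ref{f:bad_flows}). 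When such a jump occurs, $\{u\le 0\}\not\subset F_\eps$ for small $\eps$, and your inclusion fails. The correct route — which is what \cite[Lemma 1.4]{HI01} supplies — is to identify $\{u\le 0\}$ as the strictly outward-minimizing hull $E_0^+$ of $E_0$ and use the hull property $|\p E_0^+|\le|\p E_0|$; replacing $E_0$ by $E_0^+$ in your tubular-neighborhood argument then closes the estimate, since $C^{1,\alpha}$ regularity of $\p E_0^+$ (from part (i) applied at $t=0^+$, or from minimizing-hull regularity directly) is available. Your lower-bound argument in the outward-minimizing case is fine, but note it too silently relies on $\{u\le 0\}=\overline{E_0}$, which in that case is a consequence of $E_0$ already being its own minimizing hull rather than something to assume.
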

\begin{proof}
    (i) follows from the standard regularity results in geometric measure theory, see \cite[p.13]{Xu24obstacle}. For (ii) see \cite[Lemma 2.6]{Xu24obstacle}. (iii) follows from \cite[Lemma 1.4, 1.6]{HI01}.
\end{proof}

The following maximum principle is useful in later arguments.

\begin{lem}[semilocal uniqueness, {\cite[Theorem 2.2]{HI01}}]\label{lemma:max_principle} {\ }

    Let $E_0\Subset\Omega$, and $u,v$ be weak solutions of IMCF in $\Omega$ with initial value $E_0$. If for some $t>0$ we have $E_t(u)\Subset\Omega$ and $E_t(v)\Subset\Omega$, then $E_t(u)=E_t(v)$, and $u=v$ in $E_t(u)\setminus E_0$.
\end{lem}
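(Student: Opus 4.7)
The plan is to follow Huisken--Ilmanen's variational comparison scheme: use the sublevel sets of $u$ and $v$ as mutual competitors in each other's energy functional, then combine with the submodularity of perimeter and the outward-minimizing property of level sets to conclude that the two families of sublevel sets coincide. Since $u,v\in\Lip_\loc(\Omega)$ are determined by their sublevel sets, it suffices to establish $E_s(u)=E_s(v)$ for every $s\in(0,t]$, which immediately yields $u=v$ pointwise on $E_t(u)\setminus E_0$.

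Fix $s\in(0,t]$ and set $A=E_s(u)$, $B=E_s(v)$. By hypothesis $A,B\Subset\Omega$, and since $u,v$ share initial value $E_0$, both $A$ and $B$ contain $E_0$. Choose $K\Subset\Omega$ with $A\cup B\subset K^\circ$ and $\overline{E_0}\subset K^\circ$. Because $A\cup B$ and $A\cap B$ differ from $A$ (respectively $B$) only on compact subsets of $K\setminus\overline{E_0}$, they are admissible competitors in the sense of Definition~\ref{def:weak_sol}. Testing minimality of $A$ for $J_u^K$ against $A\cup B$ gives $|\p^*(A\cup B)\cap K|-|\p^* A\cap K|\ge\int_{B\setminus A}|\D u|$, while testing minimality of $B$ for $J_v^K$ against $A\cap B$ gives $|\p^* B\cap K|-|\p^*(A\cap B)\cap K|\le\int_{B\setminus A}|\D v|$. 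Subtracting the second from the first, and invoking the submodularity inequality $|\p^*(A\cup B)\cap K|+|\p^*(A\cap B)\cap K|\le|\p^* A\cap K|+|\p^* B\cap K|$, yields $\int_{B\setminus A}|\D u|\le\int_{B\setminus A}|\D v|$; the symmetric choice of competitors produces $\int_{A\setminus B}|\D v|\le\int_{A\setminus B}|\D u|$.

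To convert these mixed-gradient bounds into rigidity, combine them with the outward-minimizing property of level sets (Lemma~\ref{lemma:ivp_properties}(ii)) and the exponential identity $|\p A|=e^s|\p E_0|=|\p B|$ (Lemma~\ref{lemma:ivp_properties}(iii)). The outward-minimizing property applied to $A\subset A\cup B$ gives $|\p^* A\cap K|\le|\p^*(A\cup B)\cap K|$, and likewise for $B$; together with submodularity and the equal perimeters this forces $|\p^*(A\cup B)\cap K|=|\p^*A\cap K|$ and $|\p^*(A\cap B)\cap K|=|\p^*B\cap K|$. The variational inequalities then collapse to $\int_{B\setminus A}|\D u|=0$ and $\int_{A\setminus B}|\D v|=0$. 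Since the level sets of $u$ and $v$ are $C^{1,\alpha}$ submanifolds (Lemma~\ref{lemma:ivp_properties}(i)), this forces $|A\triangle B|=0$, hence $A=B$ as open sets.

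The main obstacle is the combinatorial bookkeeping in the variational step: because $|\D u|$ and $|\D v|$ are a priori different, neither competitor-test alone yields rigidity, and one must pair the $u$-tests with the $v$-tests and submodularity in just the right combination for the cross-terms to cancel. A secondary concern is that $u$ or $v$ may fatten, i.e.\ $|\D u|$ may vanish on sets of positive measure; this is addressed by the standard structure theorem for weak IMCFs, which ensures that for almost every $s$ the level set $\{u=s\}$ is a $C^{1,\alpha}$ hypersurface with $|\D u|>0$, so the rigidity conclusion holds for a.e.\ $s\in(0,t]$ and upper-semicontinuity of sublevel sets propagates it to every such $s$.
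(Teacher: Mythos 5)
The paper does not give a self-contained proof of this lemma; it simply cites \cite[Theorem 2.2]{HI01}. Your proposal is a from-scratch attempt, and the broad strategy — cross-testing the sublevel sets of $u$ and $v$ in each other's energy functionals, then combining with submodularity of perimeter — is indeed the skeleton of Huisken--Ilmanen's comparison argument. However, two steps in your argument do not hold up.

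\textbf{The rigidity step is a gap.} Write $a=|\p^*A\cap K|$, $b=|\p^*B\cap K|$, $c=|\p^*(A\cup B)\cap K|$, $d=|\p^*(A\cap B)\cap K|$. The facts you list are: submodularity $c+d\le a+b$; outward minimizing of $A$ gives $a\le c$; outward minimizing of $B$ gives $b\le c$; and (granting it) $a=b$. You then assert that these ``force'' $c=a$ and $d=b$. They do not: $a=b=5$, $c=6$, $d=3$ satisfies every listed inequality and has $c>a$, $d<b$. Consequently the conclusion $\int_{B\setminus A}|\D u|=0$ does not follow from this step as written. (Incidentally, the identity $|\p A|=e^s|\p E_0|$ that you invoke is only valid when $E_0$ is already outward minimizing — Lemma~\ref{lemma:ivp_properties}(iii) gives merely $|\p A|=e^{s-\sigma}|\p E_\sigma|$ for $0<\sigma<s$ — though the weaker fact $|\p A|=|\p B|$ can still be extracted via the minimizing hull of $E_0$, so this particular slip is cosmetic.)

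\textbf{The final implication is also a gap.} Even if one could establish $\int_{B\setminus A}|\D u|=0$, this does not yield $|B\setminus A|=0$: for a weak IMCF, $|\D u|$ vanishes identically on the open ``jump regions'' where the flow instantaneously passes to a strictly larger minimizing hull, and these regions have positive measure. The regularity of individual level sets $\{u=s\}$ does not preclude $|\D u|=0$ on a set of positive measure, so the appeal to the structure theorem and upper semicontinuity of sublevel sets does not close this.

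The underlying problem is that comparing $E_s(u)$ and $E_s(v)$ at a single fixed time $s$ discards too much structure. In \cite{HI01} the comparison is run at the level of the functions themselves: one uses the variational characterization of a weak solution $u$ as a minimizer of $w\mapsto\int_K|\D w|+w\,|\D u|$ over competitors $w$ with $\{w\ne u\}\Subset K$, tests $u$ against $\max(u,v)$ and $v$ against $\min(u,v)$ (and vice versa), and exploits $|\D\max(u,v)|+|\D\min(u,v)|\le|\D u|+|\D v|$ a.e. This couples all time slices simultaneously and, crucially, propagates information across the jump regions where $|\D u|=0$. To repair your argument along the original lines you would need to either pass to this function-level comparison, or at minimum cross-test $E_s(u)$ against $E_\sigma(v)$ at \emph{different} times $s\ne\sigma$ in a Stampacchia-type scheme rather than at a single common time.
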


The following types of solutions will occur in the main proof:

\begin{defn}[solution types]\label{def:sol_types} {\ }

    Let $u$ be a weak IMCF in $M$ with an initial value $E_0$, as in Definition \ref{def:ivp}. We say that:

    (i) $u$ is \textit{proper}, if  $\lim_{x\to\infty}u(x)=+\infty$, or equivalently, if $E_t\Subset M$ for all $t\in[0,\infty)$; 

    (ii) $u$ is \textit{sweeping}, if $T:=\sup(u)\in(0,\infty)$, and $E_t\Subset M$ for all $t<T$, and $E_T=M$.

    (iii) $u$ is \textit{instantly escaping}, if $T:=\sup(u)\in(0,\infty)$, and $E_T\Subset M$, and $u\equiv T$ in $M\setminus E_T$. In case (iii), we call $T$ the escape time of $u$.
\end{defn}

See Figure \ref{fig:sweep_escape} for examples of sweeping and escaping flows. Also, see Figure \ref{fig:infinite_bubbles} below for a complicated case of sweeping maximal IMCF, where there are infinitely many bubble-shapes hence infinitely many jumps in the flow.

\begin{figure}[h]
    \captionsetup{width=.9\linewidth}
    \centering
    \includegraphics[width=0.6\linewidth]{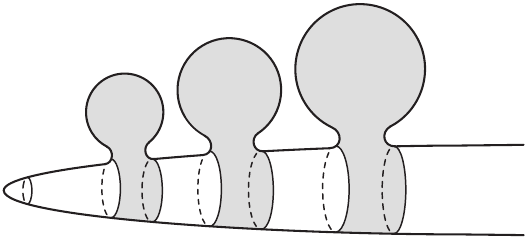}
    \caption{A complicated case of sweeping IMCF (grey regions represent jumps)}\label{fig:infinite_bubbles}
    \begin{picture}(0,0)
        \put(-140,65){$E_0$}
        \put(97,115){$\cdots\cdots$}
    \end{picture}
\end{figure}

Regarding proper IMCF, we have the well-known Geroch monotonicity formula:

\begin{lem}[Monotonicity]\label{lem:monotonicity} {\ }

    Let $M$ be 3-dimensional and has nonnegative scalar curvature. Let $u$ be a proper weak IMCF in $M$ with a $C^{1,1}$ initial value $E_0\Subset M$. Then it holds (in the Gronwall sense)
    \[\frac d{dt}\int_{\p E_t}H^2\leq  4\pi\chi(\p E_t)-\frac12\int_{\p E_t}H^2.\]
\end{lem}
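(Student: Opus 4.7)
I would follow Huisken--Ilmanen's approach from \cite{HI01}: first derive the monotonicity identity pointwise in $t$ for a smooth IMCF, and then transfer it to the weak solution through elliptic regularization. The statement is interpreted in the Gronwall sense, so it suffices to prove the integrated inequality
\[
e^{-t_2/2}\int_{\p E_{t_2}}H^2 - e^{-t_1/2}\int_{\p E_{t_1}}H^2 \ \leq\ \int_{t_1}^{t_2}e^{-t/2}\cdot 4\pi\chi(\p E_t)\,dt
\]
for a.e.\ $0<t_1<t_2$ with $E_{t_2}\Subset M$, which is the form that survives regularization.

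\textbf{Smooth case.} Suppose first $u$ is smooth with $\D u\neq 0$, so $\Sigma_t=\{u=t\}$ evolves outward with normal speed $1/H$. Then $\frac d{dt}d\mu_t=d\mu_t$, and a standard computation for the mean curvature under normal variation gives $\p_t H = -\Delta(1/H)-(|A|^2+\Ric(\nu,\nu))/H$. Multiplying by $2H$, adding $H^2$ (from the measure), and integrating by parts, I obtain
\[
\frac d{dt}\int_{\Sigma_t}H^2 = \int_{\Sigma_t}\Big[-\tfrac{2|\D H|^2}{H^2}-2|A|^2-2\Ric(\nu,\nu)+H^2\Big].
\]
I would then use the Gauss equation $2\Ric(\nu,\nu)=R+H^2-|A|^2-2K_{\Sigma_t}$, the trace-free decomposition $|A|^2=|\mathring A|^2+H^2/2$, and the Gauss--Bonnet theorem $\int 2K_{\Sigma_t}=4\pi\chi(\Sigma_t)$ to rewrite this as
\[
\frac d{dt}\int_{\Sigma_t}H^2 \ =\ 4\pi\chi(\Sigma_t)-\int_{\Sigma_t}\Big[\tfrac{2|\D H|^2}{H^2}+|\mathring A|^2+\tfrac{H^2}{2}+R\Big].
\]
Since $R\geq 0$, the stated inequality follows in the smooth setting.

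\textbf{Weak case via elliptic regularization.} For the weak solution, I would realize $u$ as the limit (on exhausting bounded domains $\Omega_L\Subset M$) of smooth solutions $u_\eps$ of
\[
\div\Big(\tfrac{\D u_\eps}{\sqrt{|\D u_\eps|^2+\eps^2}}\Big)=\sqrt{|\D u_\eps|^2+\eps^2},
\]
as in Huisken--Ilmanen. The downward translating graph $\tilde u_\eps(x,z):=u_\eps(x)-\eps z$ then solves a smooth IMCF on the product $\tilde M=M\times\R$, which inherits $\tilde R=R\geq 0$. Applying the smooth monotonicity to the level sets $\tilde\Sigma_t^\eps\subset\tilde M$, restricted to a slab $|z|\leq Z$ and normalized by $2Z$, I would obtain a $Z,\eps$-dependent Geroch-type inequality for cylindrical pieces.

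\textbf{Passing to the limit.} The main obstacle is the limit passage. Huisken--Ilmanen show that for a.e.\ $t$ the slices $\{\tilde u_\eps=t\}\cap(\Omega_L\times[-Z,Z])$ converge in $C^{1,\alpha}_{\loc}$ to $\p E_t\times[-Z,Z]$, with weak $L^2$ convergence of $H_\eps^2$; integrating in $z$, dividing by $2Z$, and using Fatou gives
\[
\int_{\p E_t}H^2\ \leq\ \liminf_{Z\to\infty}\liminf_{\eps\to 0}\tfrac1{2Z}\int_{\tilde\Sigma^\eps_t\cap\{|z|\leq Z\}}\tilde H_\eps^2,
\]
while the Euler characteristic term is handled by noting that each component of $\tilde\Sigma_t^\eps$ is a ``translator'' over a connected component of $\{u_\eps<t\}$, and that on the cylinder limit one has $\chi(\tilde\Sigma_t^\eps\cap\{|z|\leq Z\})/(2Z)\to 0$ from the sides while the contribution of the flat top/bottom piece matches $\chi(\p E_t)$. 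This produces the integrated Gronwall form of the claim in the limit $\eps\to 0$, $Z\to\infty$, $L\to\infty$. The delicate point—and the step I expect to occupy the most care—is verifying the $L^2$ lower semicontinuity of $\int H^2$ and the correct accounting of the Euler characteristic for a.e.\ $t$; both are handled in \cite[\S 5]{HI01} and carry over verbatim to our setting because $E_0$ is $C^{1,1}$ and the flow is assumed proper.
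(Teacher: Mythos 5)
Your proposal is correct and follows essentially the same route as the paper: the paper's proof of this lemma is simply a citation to \cite[Formula (5.22)]{HI01} (with \cite[p.~395--396]{HI01} for the smooth computation), and your sketch accurately reproduces the Huisken--Ilmanen argument — the smooth Geroch identity via the evolution of $H$, the traced Gauss equation, Gauss--Bonnet, and the trace-free decomposition, followed by elliptic regularization and the limit passage for the weak solution. One small algebra slip worth fixing: the integrating factor for $y' + \tfrac12 y \leq f$ is $e^{t/2}$, not $e^{-t/2}$, so the integrated Gronwall form you want is
\[
e^{t_2/2}\int_{\p E_{t_2}}H^2 - e^{t_1/2}\int_{\p E_{t_1}}H^2 \ \leq\ \int_{t_1}^{t_2} e^{s/2}\cdot 4\pi\chi(\p E_s)\,ds,
\]
equivalently $\int_{\p E_{t_2}}H^2\leq e^{-(t_2-t_1)/2}\int_{\p E_{t_1}}H^2+\int_{t_1}^{t_2}e^{-(t_2-s)/2}\cdot4\pi\chi(\p E_s)\,ds$; this is the form that appears (and survives regularization) in \cite[\S 5]{HI01}.
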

\begin{proof}
    This follows from \cite[Formula (5.22)]{HI01}; see also \cite[p. 395--396]{HI01} for computations in the smooth case.
\end{proof}

In the case that no proper solutions exist, we will make use of maximal weak solutions. A weak solution $u$ satisfying Definition \ref{def:ivp} is called \textit{maximal}, if $u\geq v$ for all weak solutions $v$ having the same initial value as $u$. We refer to \cite{Xu24obstacle} for a systematic study of this object. In the following lemma, we summarize its properties that will be used later. Note that a proper IMCF is also a maximal IMCF, hence the results below are applicable to proper IMCF as well.

\begin{lem}[properties and approximation of maximal solutions]\label{lemma:maximal_sol} {\ }

    \begin{enumerate}[label={(\roman*)}, itemsep=0.5ex]
        \item For any $C^{1,1}$ initial value $E_0\Subset M$, there exists a unique maximal solution $u$ of IMCF in $M$ with initial value $E_0$. Such solution has the following properties:
        \begin{enumerate}[itemsep=0.3ex]
            \item if $M$ satisfies \eqref{eq:bounded_geometry}, then we have the gradient estimate
            \[|\D u|\leq C(\Lambda,H_{\p E_0}^+)\qquad\text{outside $E_0$},\]
            where $H_{\p E_0}^+$ is the maximum of the mean curvature of $\p E_0$;
            \item if $E_0$ is connected, then $E_t$ is connected for each $t>0$;
            \item for each $t>0$, $M\setminus E_t$ does not have bounded connected components;
            \item $\Ps{E_t}\leq e^t\Ps{E_0}$ for all $t>0$.
        \end{enumerate}

        \item For any sequence of smooth domains $E_0\Subset\Omega_1\Subset\Omega_2\Subset\cdots\Subset M$ with $\bigcup_{l=1}^\infty\Omega_l=M$, there are functions $u_l\in\Lip_{\loc}(\Omega_l)$ such that:
        \begin{enumerate}[itemsep=0.3ex]
            \item each $u_l$ is the maximal weak solution of IMCF on $\Omega_l$ with initial value $E_0$;
            \item each $\p E_t(u_l)$ is a $C^{1,\alpha}$ surface in $M$;
            \item $u_1\geq u_2\geq u_3\geq\cdots$ outside $E_0$, and $\lim_{l\to\infty}u_l=u$ in $C^0_{\loc}(M\setminus E_0)$, where $u$ is the solution given in (i);
            \item for each $l$ and $t>s\geq0$, we have $\Ps{E_t(u_l)}\leq e^{t-s}\Ps{E_s(u_l)}$.
            \item if $E_t(u)\Subset\Omega_l$ for some $t>0$ and $l\geq1$, then $E_t(u_l)=E_t(u)$, and we have $u_l=u$ in $E_t(u_l)$.
        \end{enumerate}
    \end{enumerate}
\end{lem}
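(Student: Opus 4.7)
The plan is to treat this lemma mostly as a packaging of the theory developed in \cite{Xu24obstacle}, supplemented by one genuinely new input (the gradient estimate under bounded geometry) and a standard approximation argument.

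For Part (i), I would invoke \cite{Xu24obstacle} directly for the existence and uniqueness of the maximal weak IMCF with a given $C^{1,1}$ initial value, and for property (b) (preservation of connectedness) and (c) (absence of precompact components in $M\setminus E_t$). Property (d) is then the exponential area bound, which for the maximal solution reduces to Lemma \ref{lemma:ivp_properties}(iii) after observing that level sets of the maximal solution are outward-minimizing. The only genuinely new item is the gradient bound (a): I would start from the Huisken--Ilmanen boundary gradient estimate $|\nabla u|\le H^+_{\partial E_0}$ on $\partial E_0$ (cf.\ \cite[Lemma 3.4]{HI01}), and then run the standard maximum principle argument for $|\nabla u|$ (cf.\ \cite[Theorem 3.1]{HI01}), with the Euclidean barriers in that argument replaced by geodesic-ball barriers of a uniform small radius $r(\Lambda)$ whose mean curvatures are controlled purely by $\Lambda$. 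Bounded geometry is exactly what makes such local barriers available at a uniform scale.

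For Part (ii), define $u_l$ to be the maximal weak IMCF on $\Omega_l$ with initial value $E_0$, using the same existence result applied to the relatively compact domain $\Omega_l$; then (b) is the standard regularity from Lemma \ref{lemma:ivp_properties}(i), and (d) is again Lemma \ref{lemma:ivp_properties}(iii). For the monotonicity and convergence claim (c), observe that for $l\le l'$ both $u|_{\Omega_l}$ and $u_{l'}|_{\Omega_l}$ are weak IMCFs on $\Omega_l$ with initial value $E_0$, so maximality of $u_l$ forces them to be pointwise $\le u_l$. This yields the decreasing chain $u_1\ge u_2\ge\cdots\ge u$ on $M\setminus E_0$, and hence a pointwise decreasing limit $\tilde u\ge u$. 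The uniform gradient estimate from (i)(a) depends only on $\Lambda$ and the fixed quantity $H^+_{\partial E_0}$, so it is uniform in $l$; this gives equicontinuity and hence $u_l\to\tilde u$ in $C^0_{\loc}(M\setminus E_0)$.

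The standard compactness/lower-semicontinuity machinery for the functional $J_u$ of Definition \ref{def:weak_sol} (developed in \cite{HI01} and extended in \cite{Xu24obstacle}) then shows $\tilde u$ is itself a weak IMCF on $M$ with initial value $E_0$, and maximality of $u$ forces $\tilde u=u$. Finally, for (e), assume $E_t(u)\Subset\Omega_l$; the inequality $u_l\ge u$ gives $E_t(u_l)\subset E_t(u)\Subset\Omega_l$, so semilocal uniqueness (Lemma \ref{lemma:max_principle}) applied inside $\Omega_l$ to the two solutions $u$ and $u_l$ with common initial value $E_0$ yields $E_t(u_l)=E_t(u)$ and $u_l=u$ on $E_t(u_l)\setminus E_0$. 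The main obstacle I expect is the compactness step where we pass the $J_u$-minimizing property through the limit while $u$ itself varies; this is precisely the content imported from \cite{Xu24obstacle}, and without it the rest of the argument does not close.
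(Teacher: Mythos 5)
Your proposal follows essentially the same route as the paper: the paper's proof of this lemma consists almost entirely of citations into \cite{Xu24obstacle} (Theorems 6.1, 7.1, 7.2 and Corollary 3.14), plus the short interior-maximum-principle argument for (iie) that you reproduce verbatim, so the two arguments differ only in that you sketch the content of some of those cited results.

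One imprecision worth flagging: for the equicontinuity step giving $u_l\to u$ in $C^0_{\loc}$, you invoke the global gradient bound from item (ia), which presupposes \eqref{eq:bounded_geometry}; but the lemma's part (ii) is stated without a bounded-geometry hypothesis. The convergence in \cite[Theorem 7.1]{Xu24obstacle} relies instead on the \emph{interior} local gradient estimate of Huisken--Ilmanen type, which holds away from $\p\Omega_l$ with constants depending only on local geometry, so equicontinuity on compacta comes for free without any global curvature hypothesis. A second small caveat: in your monotonicity argument for (iic), the claim ``maximality of $u_l$ forces $u_{l'}|_{\Omega_l}\leq u_l$'' tacitly identifies the obstacle solution from \cite[Theorem 6.1]{Xu24obstacle} with the maximal weak solution on $\Omega_l$; this identification is exactly item (iia) and is itself one of the things the lemma is asserting, so you should make clear you are treating it as part of the cited package rather than as obvious. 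Neither point is a fatal gap, but both deserve a sentence.
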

\begin{proof}
    For item (ii), we set each $u_l$ to be the weak IMCF with initial value $E_0$ and outer obstacle $\p\Omega_l$, given by \cite[Theorem 6.1]{Xu24obstacle}. Items (iia,\,iib) follow from items (iii,\,iv) of \cite[Theorem 6.1]{Xu24obstacle}. Item (iic) and the existence claim in item (i) follows from \cite[Theorem 7.1]{Xu24obstacle} and its proof. Item (iid) follows from \cite[Corollary 3.14]{Xu24obstacle}. Item (iie) is proved as follows: by item (iic) we have $E_t(u_l)\subset E_t(u)\Subset\Omega_l$, then the conclusion is implied by Lemma \ref{lemma:max_principle} (interior maximum principle). Items (ia,\,ib,\,id) follow directly from \cite[Theorem 7.2]{Xu24obstacle}. Item (ic) follows from Lemma \ref{lemma:ivp_properties}(ii).
\end{proof}
   
    


\begin{lem}[regularity and density bounds]\label{lemma:reg_density} {\ }

    Let $M$ satisfy \eqref{eq:bounded_geometry}, and $u$ be a weak solution of IMCF in some domain $\Omega\subset M$. Moreover, assume the gradient bound $|\D u|\leq L$ on $\Omega$. Then for all $t\in\R$ the following hold:
    
    (i) for all $x\in\p E_t(u)$ and $r\leq\Lambda^{-1}$ with $B(x,2r)\subset\Omega$, we have the density bound
    \[\P{E_t\cap B(x,r)}\geq C(L,\Lambda)r^{n-1};\]
    
    (ii) if $M$ further satisfies $|\D^k\Rm|\leq\Lambda'$, $\forall\,1\leq k\leq 5$, then $\p E_t(u)$ is a $C^{1,\alpha}$ surface, where the $C^{1,\alpha}$ norm\footnote{We say that the $C^{1,\alpha}$ norm of a hypersurface $\Sigma$ is bounded by $C$ near $x$, if there exists a geodesic normal coordinates in $B(x,C^{-1})$ in which $\Sigma$ is the graph of a function $f$ with $\|f\|_{C^{1,\alpha}}\leq C$.} near $x\in\p E_t(u)$ depends only on $\alpha,L,\Lambda,\Lambda'$, and $d(x,\p\Omega)$.
\end{lem}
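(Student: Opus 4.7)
The plan is to first extract from Definition \ref{def:weak_sol} and $|\D u|\le L$ a standard Tamanini-type almost-minimality property for each sublevel set $E_t$, and then to use this to derive (i) via the classical density-estimate iteration and (ii) via the De Giorgi--Tamanini regularity theorem for perimeter almost-minimizers in dimension $n\le 7$. The almost-minimality is immediate: for any competitor $F$ with $F\Delta E_t\Subset K\Subset\Omega$, the inequality $J_u^K(E_t)\le J_u^K(F)$ rearranges, after using $|\D u|\le L$, to
\[
|\p^* E_t\cap K|\le|\p^* F\cap K|+L\,|F\Delta E_t|,
\]
and the same inequality holds with $E_t$ replaced by its complement.

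For (i), I would set $V(r)=|E_t\cap B(x,r)|$ and insert $F=E_t\setminus B(x,r)$ into the above to obtain, via the coarea formula, $|\p^* E_t\cap B(x,r)|\le V'(r)+LV(r)$ for a.e.\ $r$. Combined with the isoperimetric inequality on balls of radius $r\le\Lambda^{-1}$ (which has a uniform constant $c(\Lambda)$ thanks to \eqref{eq:bounded_geometry}), this becomes a differential inequality of the form $c\,V(r)^{(n-1)/n}\le 2V'(r)+LV(r)$. Substituting $\phi=V^{1/n}$ turns it into a first-order ODE inequality that can be integrated for $r\le r_0(L,\Lambda)$, yielding $V(r)\ge c(L,\Lambda)r^n$. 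Running the same argument on the complement gives the reverse density bound $|B(x,r)\setminus E_t|\ge c(L,\Lambda)r^n$, and then the relative isoperimetric inequality on $B(x,r)$ produces the desired perimeter lower bound $|\p E_t\cap B(x,r)|\ge C(L,\Lambda)\,r^{n-1}$.

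For (ii), I would work in geodesic normal coordinates on $B(x,r_1)$, with $r_1=r_1(\Lambda,\Lambda',d(x,\p\Omega))$ chosen so that the exponential map is a chart with $C^{5,\alpha}$ control on the metric and its inverse; these bounds come from $|\D^k\Rm|\le\Lambda'$ for $1\le k\le 5$. In such coordinates the Riemannian perimeter differs from the Euclidean perimeter by a smooth Jacobian factor, so the almost-minimality property transfers to a Euclidean almost-minimality with a modified constant depending on $L,\Lambda,\Lambda'$. Since $n\le 7$, the De Giorgi--Tamanini regularity theorem for perimeter almost-minimizers then yields that $\p E_t$ is locally a $C^{1,\alpha}$ graph, with quantitative norm controlled by the almost-minimality constant, the density bound from (i), and the metric's coordinate regularity. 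Converting back to the manifold gives the assertion.

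The hard part will be (ii): tracking all constants carefully in the passage between the manifold and the normal-coordinate chart, and verifying that the quoted Tamanini-type regularity theorem is applied in its effective quantitative form so that the final estimate depends only on $\alpha,L,\Lambda,\Lambda'$, and $d(x,\p\Omega)$. The higher-derivative hypothesis on $\Rm$ is included precisely to make this coordinate reduction clean. Everything else amounts to routine adaptations of well-established geometric-measure-theory techniques to the weak-IMCF setting.
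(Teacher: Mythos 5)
Your proposal is correct and follows essentially the same route as the paper: both derive from Definition \ref{def:weak_sol} and the gradient bound $|\nabla u|\le L$ the Tamanini-type almost-minimality inequality $|\partial^*E_t\cap K|\le |\partial^*F\cap K|+L|E_t\Delta F|$, and then invoke the standard geometric-measure-theory density estimates for (i) and the De Giorgi--Tamanini regularity theory (Maggi, Chapter 21) in normal coordinates for (ii). The paper's proof is a two-line citation of exactly these facts; you have merely unpacked the standard arguments behind them.
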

\begin{proof}
    Note that the gradient bound and Definition \ref{def:weak_sol} implies
    \[\Ps{E_t\cap K}\leq\Ps{F\cap K}+L|E_t\Delta F|,\quad\text{for all $F$ with $E_t\Delta F\Subset K\Subset\Omega$.}\]
    Then item (i) follows from the classical area monotonicity formula, while (ii) follows from the standard regularity theory, see \cite[Chapter 21]{Maggi}.
\end{proof}

\begin{cor}[diameter bounds for level sets]\label{cor:diam_bound} {\ }

    Let $E_0\Subset M$ be a $C^{1,1}$ domain, and $u$ be the maximal solution of IMCF in $M$ with initial value $E_0$, as given by Lemma \ref{lemma:maximal_sol}(i). Denote $\cC(\p E_0):=\max\big\{\diam(\p E_0),|\p E_0|,H_{\p E_0}^+\big\}$.

    (i) Suppose $M$ satisfies \eqref{eq:bounded_geometry}, and $0<t\le T$ is such that $E_t\ne M$. Then $\p E_t$ is compact, and each connected component of $\p E_t$ has diameter bounded by $C\big(T,\Lambda,\cC(\p E_0)\big)$.

    (ii) If further $M$ has only one end, then $E_t\Subset M$ for $t$ as in (i).
\end{cor}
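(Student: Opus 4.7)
The strategy for (i) is to combine the gradient and perimeter estimates for the maximal solution with a uniform lower density bound and a simple covering argument. By Lemma~\ref{lemma:maximal_sol}(ia), the maximal solution $u$ satisfies $|\nabla u| \le L$ on $M \setminus \overline{E_0}$ with $L = C(\Lambda, H_{\partial E_0}^+)$, and by Lemma~\ref{lemma:maximal_sol}(id) we have $|\partial E_t| \le e^T |\partial E_0|$. Applying Lemma~\ref{lemma:reg_density}(i) to the restriction of $u$ to $\Omega := M \setminus \overline{E_0}$, with fixed radius $r_0 = \Lambda^{-1}/10$, yields a constant $c = c(\Lambda, H_{\partial E_0}^+) > 0$ such that
\[ |\partial E_t \cap B(x, r_0)| \ge c\, r_0^{n-1} \qquad \text{whenever } x \in \partial E_t \text{ and } d(x, \overline{E_0}) \ge 2 r_0. \]
Compactness of $\partial E_t$ then follows immediately: if $\partial E_t$ were unbounded, one could extract a sequence $x_i \in \partial E_t$ with $d(x_i, \overline{E_0}) \to \infty$ and pairwise distances $\ge 2 r_0$, whose disjoint $r_0$-balls would force $|\partial E_t| = \infty$, a contradiction.

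For the diameter bound, fix a connected component $\Sigma$ of $\partial E_t$, which is a compact $C^{1,\alpha}$ surface by Lemma~\ref{lemma:ivp_properties}(i). Split $\Sigma = \Sigma_n \cup \Sigma_f$ into its ``near'' part $\Sigma_n := \{x \in \Sigma : d(x, \overline{E_0}) < 2 r_0\}$ and its ``far'' part $\Sigma_f$. Choose a maximal disjoint family $\{B(x_k, r_0)\}_{k=1}^N$ with $x_k \in \Sigma_f$; the density bound forces $N \le c^{-1} r_0^{-(n-1)} e^T |\partial E_0|$, and maximality gives $\Sigma_f \subset \bigcup_k B(x_k, 2 r_0)$. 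On the other hand, $\Sigma_n$ lies in the $2 r_0$-neighborhood of $\partial E_0$ and thus has ambient diameter at most $\cC(\partial E_0) + 4 r_0$. Hence $\Sigma$ is covered by at most $N+1$ relatively open sets, each of ambient diameter at most $\cC(\partial E_0) + 4 r_0$. Since $\Sigma$ is connected, a chain argument through overlapping cover pieces yields
\[ \diam(\Sigma) \le (N+1)\big(\cC(\partial E_0) + 4 r_0\big) \le C\big(T, \Lambda, \cC(\partial E_0)\big). \]

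For part (ii), the $C^{1,1}$ domain $E_0$ is connected, so $E_t$ is connected by Lemma~\ref{lemma:maximal_sol}(ib), and $\partial E_t$ is compact by (i). Since $E_t \ne M$ and $M \setminus E_t$ has no bounded components by Lemma~\ref{lemma:maximal_sol}(ic), the complement $M \setminus E_t$ is nonempty and unbounded. Lemma~\ref{lemma:one_end_cpt} then gives $E_t \Subset M$. The main technical subtlety lies in (i): the density bound does not apply to points of $\Sigma$ lying within $2 r_0$ of $\overline{E_0}$, since the required ball $B(x, 2r_0)$ would meet $\overline{E_0}$. This is circumvented by isolating the ``near'' contribution $\Sigma_n$, whose diameter is automatically controlled by $\cC(\partial E_0)$ via compactness of $\partial E_0$, and running the covering estimate only on $\Sigma_f$.
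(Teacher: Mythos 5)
Your proof is correct and takes essentially the same approach as the paper: both combine the gradient estimate of Lemma~\ref{lemma:maximal_sol}(ia), the area bound $|\partial E_t|\le e^T|\partial E_0|$, and the lower density bound of Lemma~\ref{lemma:reg_density}(i) to bound the number of balls of radius $\sim\Lambda^{-1}$ needed to cover each connected component of $\partial E_t$, treating the part near $\partial E_0$ separately, and both deduce (ii) directly from Lemma~\ref{lemma:maximal_sol}(ib,ic) together with Lemma~\ref{lemma:one_end_cpt}. The only cosmetic difference is that the paper uses a packing of disjoint balls (counting the near ones via volume comparison) whereas you use a covering plus a chain argument and bound the near part $\Sigma_n$ by $\diam(\partial E_0)+4r_0$ directly.
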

\begin{proof}
    (i) First, Lemma \ref{lemma:maximal_sol}(ia) provides a uniform gradient estimate in $M\setminus E_0$. Let $\Sigma$ be a connected component of $\p E_t$. Note that one can find at least $\lfloor\diam(\Sigma)/\Lambda\rfloor$ disjoint balls of the form $B(x_i,1/2\Lambda)$, with $x_i\in\Sigma$. If $\Sigma$ is noncompact, then we can find infinitely many such disjoint balls. Now note that:
    \begin{enumerate}[leftmargin=*, topsep=1pt, itemsep=1pt]
        \item If $d(x_i,E_0)\geq1/\Lambda$, then by Lemma \ref{lemma:reg_density}(i) we have $|\Sigma\cap B(x_i,1/2\Lambda)|\geq C(\Lambda,H_{\p E_0}^+)$.
        \item By volume comparison, among all the $x_i$, there are at most $C(\Lambda,\diam(\p E_0))$ many points such that $d(x_i,E_0)<1/\Lambda$.
    \end{enumerate}
    Furthermore, by Lemma \ref{lemma:maximal_sol}(id) we have $|\Sigma|\leq|\p E_t|\leq e^t|\p E_0|$.
    The diameter bound then follows from area comparison. Since we have proved that each component of $\p E_t$ has a uniform area lower bound, the compactness of $\p E_t$ again follows from area comparison.

    (ii) If $M$ has only one end, then Lemma \ref{lemma:maximal_sol}(ib)(ic) and the fact $E_t\ne M$ implies $E_t\Subset M$, due to Lemma \ref{lemma:one_end_cpt}.
\end{proof}

Finally, the following lemma shows that for $E_0$ a sufficiently small geodesic ball, there is a weak solution that remains proper for some definite amount of time. See the last picture in Figure \ref{f:bad_flows} for a weak solution that jumps to infinity at $t=0$.

\begin{lem}[no instant escape]\label{lemma:non_trivial} {\ }

    Assume $M$ satisfies \eqref{eq:bounded_geometry}, and fix $p\in M$. There is a sufficiently small $r_0>0$ such that: setting $E_0=B(p,r_0)$, the maximal solution $u$ given by Lemma \ref{lemma:maximal_sol}(i) satisfies $E_t(u)\Subset M$ for some $t>0$.
\end{lem}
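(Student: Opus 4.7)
The plan is to show that on a fixed neighborhood $W$ of $\partial E_0$, the maximal solution $u$ coincides with the smooth inverse mean curvature flow starting from $\partial E_0$; then the connectedness of $E_t(u)$ will prevent its level sets from escaping this neighborhood.

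For $r_0$ sufficiently small (depending on $\Lambda$), the sphere $\partial E_0=\partial B(p,r_0)$ is smoothly embedded with mean curvature uniformly bounded below by a positive constant of order $(n-1)/r_0$. By the classical short-time existence theorem for smooth IMCF due to Huisken--Polden/Gerhardt, there exist $\tau>0$ and a precompact open set $V\Subset M$ containing $\overline{E_0}$ such that the smooth IMCF starting from $\partial E_0$ smoothly foliates $V\setminus\overline{E_0}$ up to time $\tau$. This produces a smooth function $v$ on $V\setminus\overline{E_0}$ solving \eqref{eq-prelim:smooth_IMCF}, with $E_t(v)\Subset V$ for every $t<\tau$. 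Smooth IMCF being a weak IMCF, $v$ is a weak solution of IMCF on $V$ with initial value $E_0$.

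Now I apply the approximation scheme of Lemma \ref{lemma:maximal_sol}(ii), arranging the exhaustion so that $V\Subset\Omega_1$. For each $l$, both $u_l|_V$ and $v$ are weak solutions of IMCF on $V$ with initial value $E_0$. The short-time smoothness of weak IMCF for smoothly mean convex initial data (Huisken--Ilmanen \cite[Sec.~3]{HI01}, adapted to the obstacle setting of \cite{Xu24obstacle}) produces a uniform time $\tau'\in(0,\tau]$, independent of $l$, within which $E_t(u_l|_V)\Subset V$; Lemma \ref{lemma:max_principle} then forces $u_l=v$ throughout $V$ for $t\in[0,\tau']$. Passing to the limit $l\to\infty$ using the $C^0_{\loc}$ convergence in Lemma \ref{lemma:maximal_sol}(iic) gives $u=v$ on $V$ for $t\in[0,\tau']$.

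Choose $W=\{v<\tau'/2\}$, which satisfies $W\Subset V\Subset M$ with smooth boundary $\partial W=\{v=\tau'/2\}$, and $E_0\subset W$. On $\partial W$ we have $u=v=\tau'/2$. Hence for any $t\in(0,\tau'/2)$, the level set $E_t(u)=\{u<t\}$ is disjoint from $\partial W$. Since $E_t(u)$ is connected (Lemma \ref{lemma:maximal_sol}(ib)) and contains $E_0\subset W$, it cannot have any component in $M\setminus\overline{W}$, so $E_t(u)\subset W\Subset M$, as required.

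The main obstacle is the uniform short-time smoothness invoked in the second paragraph: that the approximating maximal solutions $u_l$ agree with the smooth flow $v$ on a fixed neighborhood of $\partial E_0$ for a uniform time independent of $l$. This is a local regularity/uniqueness statement for weak IMCF near a smoothly mean convex initial surface, which may alternatively be derived from a bootstrap using the gradient bound in Lemma \ref{lemma:maximal_sol}(ia) together with the density/$C^{1,\alpha}$ bounds of Lemma \ref{lemma:reg_density} and the semilocal uniqueness Lemma \ref{lemma:max_principle} applied on shrinking neighborhoods of $\partial E_0$.
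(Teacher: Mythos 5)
Your proposal and the paper's proof are genuinely different, and unfortunately your route contains a gap that you identify but do not close — and it is not a small one.

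The paper's proof does not reference the smooth flow at all. Instead, it shows that bounded geometry \eqref{eq:bounded_geometry} forces a lower bound on the isoperimetric profile,
\[
|\p\Omega|\geq C(\Lambda)^{-1}\min\bigl\{1,|\Omega|^{(n-1)/n}\bigr\}\qquad\forall\,\Omega\Subset M,
\]
and then applies the cited existence result \cite[Theorem 4.1]{Xu24proper}, which under exactly such an isoperimetric hypothesis produces a \emph{globally defined} weak IMCF $u'$ on $M$ starting from $E_0$ with $E_t(u')\Subset M$ for some $t>0$, provided $|\p E_0|$ is small enough. Since $u$ is maximal, $u\geq u'$, hence $E_t(u)\subset E_t(u')\Subset M$. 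The whole point is to have one \emph{global} competitor solution with bounded level sets; maximality then does the rest in one line.

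Your plan tries to use the local smooth flow $v$ on a neighborhood $V$ as that competitor. But $v$ is only a weak solution on $V$, not on $M$ (nor on $\Omega_l$), so $u_l\geq v$ is not available. To compare $u_l$ with $v$ you invoke the semilocal uniqueness Lemma~\ref{lemma:max_principle}, but that lemma requires both $E_t(u_l)\Subset V$ and $E_t(v)\Subset V$. The second is given by smooth short-time existence; the first is precisely the containment you are trying to establish — the argument is circular. Moreover, the fallback you propose (bootstrap from the gradient bound and density estimates) also does not work: the gradient bound in Lemma~\ref{lemma:maximal_sol}(ia) is an \emph{upper} bound on $|\D u|$, which is trivially satisfied after a jump (where $u$ is locally constant), so it does not rule out the instant-escape scenario $u\equiv 0$ outside $E_0$. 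Nor can you extend $v$ to a weak solution on $M$ by setting it constant outside $V$: for the level set $E_\tau(v)$ to be admissible one would need it to be outward minimizing in all of $M$, and that is a global property controlled by exactly the kind of isoperimetric information the paper introduces — without which it may fail (e.g.\ if the manifold had a thin tube to infinity far away). In short, your approach amounts to proving a uniform-in-$l$ short-time non-escape statement for the approximating maximal solutions $u_l$, which is essentially a restatement of the lemma rather than a derivation of it; the missing ingredient is the isoperimetric lower bound coming from \eqref{eq:bounded_geometry}, which is the entire content of the paper's proof.
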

\begin{proof}
    Note that \eqref{eq:bounded_geometry} implies a uniform lower bound on the volume of balls: there exists $V>0$, such that
    \[|B(x,1)|\geq V,\qquad\forall\,x\in M.\]
    This further implies $|M|=\infty$ since $M$ is connected and noncompact. Let us show that there exists constant $C(\Lambda)>0$ such that
    \begin{equation}\label{eq:non_trivial_aux1}
        |\p\Omega|\geq C(\Lambda)^{-1}\min\Big\{1,|\Omega|^{(n-1)/n}\Big\},\qquad\forall\,\Omega\Subset M\text{ with smooth boundary}.
    \end{equation}
    Once this is proved, then we recall the following result:
    \begin{theorem}[{\cite[Theorem 4.1]{Xu24proper}}]
        Let $M$ be of infinite volume, and whose isoperimetric profile function $\operatorname{ip}(v):=\inf\big\{|\p E|: E\Subset M, |E|=v\big\}$ satisfies
        \[\liminf_{v\to\infty}\operatorname{ip}(v)\geq A>0,\qquad \int_0^{v_0}\frac{dv}{\operatorname{ip}(v)}<\infty\quad\text{for some $v_0>0$.}\]
        Then for all $E_0$ with $|\p E_0|<A$, there exists a weak IMCF $u'$ in $M$ with initial value $E_0$, such that $E_t(u')\Subset M$ for some $t>0$.
    \end{theorem}
    Joining this result with \eqref{eq:non_trivial_aux1}, it follows that we may pick $r_0\ll1$ and obtain an IMCF $u'$ starting from $E_0$, such that $E_t(u')\Subset M$ for some $t>0$. Since $u$ is the maximal solution, it follows that $E_t(u)\Subset E_t(u')\Subset M$ for the same $t$. This proves the lemma.
    
    It remains to prove \eqref{eq:non_trivial_aux1}. We divide into two cases.

    \textbf{Case 1}: $|\Omega\cap B(x,1)|\geq V/2$ for some $x\in M$. By continuity, we can find another point $x'\in M$ such that $|\Omega\cap B(x',1)|=V/2$. Recall a well-known isoperimetric inequality:
    \begin{equation}\label{eq:isop_ineq}
        \min\big\{|B(x',1)\cap\Omega|,|B(x',1)\setminus\Omega|\big\}\leq C(\Lambda)\big|\p\Omega\cap B(x',1)\big|^{\frac{n}{n-1}},
    \end{equation}
    and note that the left hand side is at least $V/2$. Thus $|\p\Omega|\geq C(\Lambda)^{-1}$ for this case.

    \textbf{Case 2}: $|\Omega\cap B(x,1)|<V/2$ for all $x\in M$. Then \eqref{eq:isop_ineq} implies $|B(x,1)\cap\Omega|\leq C(\Lambda)|B(x,1)\cap\p\Omega|^{n/(n-1)}$ for all $x\in M$. By volume doubling, we may find finitely many balls $\{B(x_i,1)\}_{i=1}^m$ that covers $\overline\Omega$, whose covering multiplicity is bounded by $C(\Lambda)$.
    \[\begin{aligned}
        |\Omega|\leq\sum_{i=1}^m\big|\Omega\cap B(x_i,1)\big|
        &\leq C(\Lambda)\sum_{i=1}^m\big|\p\Omega\cap B(x_i,1)\big|^{\frac{n}{n-1}} \\
        &\leq C(\Lambda)\Big(\sum_{i=1}^m\big|\p\Omega\cap B(x_i,1)\big|\Big)^{\frac{n}{n-1}}
        \leq C(\Lambda)|\p\Omega|^{\frac n{n-1}}.
    \end{aligned}\]
    Then \eqref{eq:non_trivial_aux1} follows by combining the two cases.
\end{proof}

\begin{lem}\label{l: three cases}
    Suppose $(M,g)$ satisfies \eqref{eq:bounded_geometry} and has only one end. Let $E_0=B(p,r_0)$ be as in Lemma \ref{lemma:non_trivial}, and $u$ be the maximal IMCF with initial condition $E_0$, as given by Lemma \ref{lemma:maximal_sol}(i). Then $u$ is either proper, sweeping, or instantly escaping.
\end{lem}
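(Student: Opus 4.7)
The plan is to set $T := \sup_M u \in (0,\infty]$ and then do a clean case split according to whether $T$ is finite and, in the finite case, whether the top sublevel set $E_T = \{u < T\}$ exhausts $M$. That $T > 0$ is immediate from Lemma \ref{lemma:non_trivial}: it produces some $t_0 > 0$ with $E_{t_0}(u) \Subset M$, which forces $u \geq t_0$ on a nonempty set and hence $T \geq t_0 > 0$. The single analytic input I need beyond this is the following dichotomy, which is exactly Corollary \ref{cor:diam_bound}(ii): for every $t > 0$, either $E_t = M$ or $E_t \Subset M$. (The intermediate pathology of $E_t$ being a proper but noncompact subset with bounded-diameter components is ruled out by the compactness of $\partial E_t$ coming from bounded geometry, combined with one-endedness via Lemma \ref{lemma:one_end_cpt}.)

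Given this dichotomy, the three cases become bookkeeping. In Case (i), $T = \infty$: for each $t > 0$, having $E_t = M$ would give $u < t$ on all of $M$ and hence $T \leq t$, contradicting $T = \infty$; so $E_t \neq M$ and the dichotomy yields $E_t \Subset M$, matching Definition \ref{def:sol_types}(i). In Case (ii), $T < \infty$ with $E_T = M$: the same tautology shows $E_t \neq M$ for every $t < T$, hence $E_t \Subset M$ by the dichotomy, and combined with $E_T = M$ this matches Definition \ref{def:sol_types}(ii). In Case (iii), $T < \infty$ with $E_T \neq M$: the dichotomy gives $E_T \Subset M$, and because $u \leq T$ everywhere by the very definition of $T$, one has
\[
    M \setminus E_T \,=\, \{x \in M : u(x) \geq T\} \,=\, \{x \in M : u(x) = T\},
\]
so $u \equiv T$ outside $E_T$, matching Definition \ref{def:sol_types}(iii).

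There is essentially no obstacle in the present lemma itself: all the work has been pushed into Lemma \ref{lemma:non_trivial} (which rules out instant escape starting from a sufficiently small geodesic ball, so that $T > 0$) and into Corollary \ref{cor:diam_bound}(ii) (which uses the gradient bound of Lemma \ref{lemma:maximal_sol}(ia), area comparison, and one-endedness to upgrade $E_t \neq M$ to $E_t \Subset M$). The only conceptual point worth flagging while writing up is that the three possibilities in Definition \ref{def:sol_types} are both disjoint and exhaustive under $T = \infty$ vs.\ $T < \infty$ and $E_T = M$ vs.\ $E_T \neq M$; once that is observed, the proof is a one-paragraph case analysis.
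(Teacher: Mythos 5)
Your proof is correct and follows essentially the same route as the paper's: the two key inputs in both are Lemma \ref{lemma:non_trivial} (to guarantee $\sup u > 0$) and Corollary \ref{cor:diam_bound}(ii) (the dichotomy $E_t = M$ or $E_t \Subset M$), after which the classification is bookkeeping. The only cosmetic difference is that the paper organizes the case split around $T' := \sup\{t \geq 0 : E_t \Subset M\}$ and then identifies $T' = \sup u$, whereas you start directly from $\sup u$; the two are equivalent.
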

\begin{proof}
    Let $T':=\sup\big\{t\geq0: E_t\Subset M\big\}$. By Lemma \ref{lemma:non_trivial} we have $T'>0$. If $T'=\infty$ then $u$ is proper. Now assume $T'<\infty$. By Corollary \ref{cor:diam_bound}(ii), we must have $E_t=M$ for all $t>T'$. Therefore $T'=\sup(u)$. Finally, consider $E_{T'}$: if $E_{T'}\Subset M$ then $u$ is instantly escaping by Definition \ref{def:sol_types}. Otherwise, we must have $E_{T'}=M$ by Corollary \ref{cor:diam_bound}(ii) again. Hence $M=\bigcup_{t<T'}E_t$ with $E_t\Subset M$ for each $t<T'$. This implies that $u$ is sweeping by Definition \ref{def:sol_types}.
\end{proof}

\begin{rmk}
    Both the bounded geometry and the one-endedness assumption cannot be removed. Indeed, Figure \ref{f:bad_flows} shows three examples where:
    
    (i) $M$ has bounded geometry, but the maximal IMCF partially rushes to infinity at time $T$ due to the presence of a cylindrical end;
    
    (ii) for a manifold with unbounded geometry, the maximal IMCF could partially rush to infinity in finite time;
    
    (iii) $M$ has an end with finite volume (thus has unbounded geometry), and the maximal IMCF rushes to infinity at time zero. 

    \begin{figure}[h]
    \centering
    \includegraphics[width=0.9\textwidth]{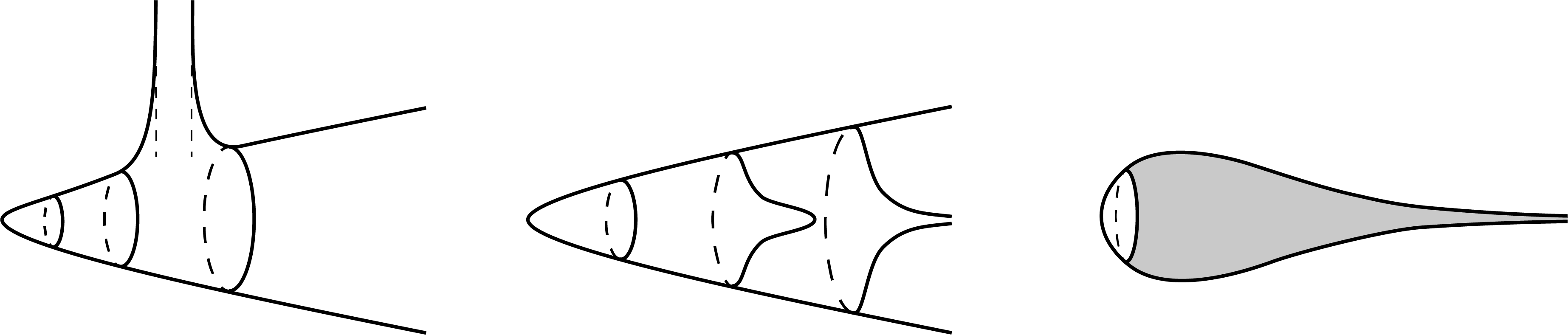}
    \caption{Examples of maximal IMCF that do not fall in Definition \ref{def:sol_types}.}\label{f:bad_flows}
    \begin{picture}(0,0)
        \put(-192,75){$\p E_0$}
        \put(-142,87){$\p E_T$}
        \put(-50,78){$\p E_0$}
        \put(8,91){$\p E_T$}
        \put(68,81){$\p E_0$}
        \put(130,75){$u\equiv0$}
    \end{picture}
\end{figure}
\end{rmk}

\section{Instantly escaping IMCF}\label{sec:escaping}

Consider an instantly escaping maximal solution $u$ of the IMCF with a bounded initial value (see Definition \ref{def:sol_types}). This means that there exists $T\in(0,\infty)$ such that
\[E_T(u)\Subset M,\qquad u\equiv T\ \ \text{in}\ \ M\setminus E_T(u).\]
The proof of the main theorems \ref{thm-main:R3}, \ref{thm-main:handlebody} are based on finding exhaustions of $M$ by level sets of IMCFs. However, for an instantly escaping solution $u$, there is no level set outside $E_T(u)$. We will modify the underlying metric and slightly enlarge it at infinity, so that a level set of the new maximal IMCF will appear in the edited region. Letting the edited region tend to infinity and the perturbation tend to zero, we finally obtain the desired sequence of exhausting surfaces. The main result of this section is Theorem \ref{c: almost minimizing exhaustion}, which is based on Lemma \ref{lemma:perturb} (the main perturbation lemma).

\begin{figure}[h]
    \vspace{18pt}
    \centering
    \includegraphics[width=0.9\linewidth]{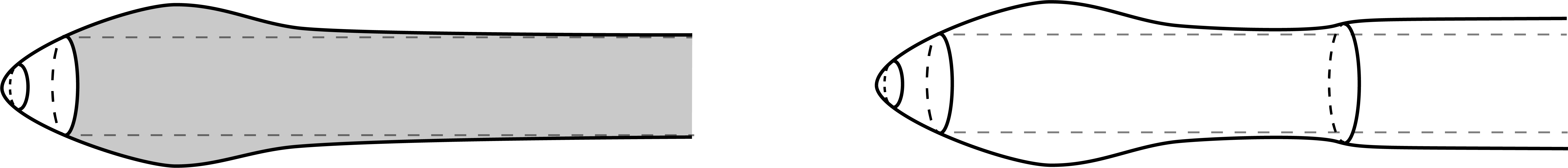}
    \begin{picture}(0,0)(198,35)
        \put(-150,52){$u\equiv T$}
        \put(-210,40){$\p E_0$}
        \put(-190,30){$\p E_T$}
        \put(10,40){$\p E_0$}
        \put(30,30){$\p E_T$}
        \put(116,28){$\p E_{T+t}(\tilde u)$}
        \put(20,80){$\overbrace{\hspace{105pt}}^{\tilde g=g}$}
        \put(145,80){$\overbrace{\hspace{45pt}}^{\tilde g=(1+\eps)g}$}
    \end{picture}
    \vspace{3pt}
    \caption{Left: the instantly escaping maixmal IMCF. Right: the metric perturbation and the new level set (see Lemma \ref{lemma:perturb} below).}\label{fig:perturb}
\end{figure}

First, we prove the following lemma which says that $|\p E_T|$ equals the ``circumference at infinity'' of $M$.

\begin{lem}\label{c: a first exhaustion}
    Suppose $u$ is an escaping maximal IMCF starting from some $C^{1,1}$ initial value $E_0\Subset M$. Let $T$ be the escaping time of $u$. Then there exists an exhaustion of $M$ by $C^{1,\alpha}$ domains $\{F_k\}_{k=1}^\infty$ such that $E_T(u)\Subset F_1\Subset F_2\Subset\cdots\Subset M$ and $\lim_{k\to\infty}\Ps{F_k}=|\p E_T(u)|$.
\end{lem}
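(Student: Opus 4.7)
The plan is to construct the $F_k$ as sublevel sets of the maximal obstacle IMCFs on an exhausting family of compact subdomains of $M$, evaluated at times $t_k$ slightly past the escape time $T$. The unconditional exponential perimeter bound in Lemma~\ref{lemma:maximal_sol}(iid) will then produce an upper bound on $|\p F_k|$ matching a lower bound from outward perimeter-minimality of $\p E_T(u)$.

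First I would fix a smooth exhaustion $E_0\Subset\Omega_1\Subset\Omega_2\Subset\cdots\Subset M$ of $M$ with $E_T(u)\Subset\Omega_1$, and apply Lemma~\ref{lemma:maximal_sol}(ii) to obtain maximal obstacle IMCFs $u_l$ on $\Omega_l$ with initial value $E_0$, satisfying $u_l\searrow u$ in $C^0_{\loc}(M\setminus E_0)$. Since $E_T(u)\Subset\Omega_l$ for every $l$, item (iie) gives $E_T(u_l)=E_T(u)$ and hence $|\p E_T(u_l)|=|\p E_T(u)|$. Next I would establish that $E_T(u)$ is locally outward perimeter-minimizing in $M$: for each $t<T$, Lemma~\ref{lemma:ivp_properties}(ii) gives that $E_t(u)$ is locally outward minimizing with $|\p E_t(u)|=e^{t-T}|\p E_T(u)|$ (item (iii) of the same lemma), and passing to the limit $t\nearrow T$ via lower semicontinuity of perimeter transfers this property to $E_T(u)$. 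Consequently $|\p F|\geq|\p E_T(u)|$ for every $F\supset E_T(u)$ with compact $F\setminus E_T(u)$.

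I would then set $t_k:=T+\tfrac{1}{k}$ and choose $l_k$ inductively (with $F_0:=E_T(u)$, $l_0:=0$): using the uniform convergence $u_l\to u\leq T$ on the compact set $\overline{\Omega_k\cup F_{k-1}}$ (recall $u_{l_k}<0$ inside $E_0$), pick $l_k\geq l_{k-1}$ so large that $u_{l_k}<T+\tfrac{1}{2k}$ on this compact set. Define $F_k:=E_{t_k}(u_{l_k})=\{u_{l_k}<t_k\}$. Then $\overline{\Omega_k\cup F_{k-1}}\subset F_k$, so $\{F_k\}$ is a nested exhaustion of $M$ with $E_T(u)\Subset F_1\Subset F_2\Subset\cdots\Subset M$; each $\p F_k$ is $C^{1,\alpha}$ by Lemma~\ref{lemma:maximal_sol}(iib); and Lemma~\ref{lemma:maximal_sol}(iid) applied with $s=T$ yields
\[|\p F_k|\leq e^{t_k-T}|\p E_T(u_{l_k})|=e^{1/k}|\p E_T(u)|.\]
Together with the outward-minimality lower bound this gives $|\p E_T(u)|\leq|\p F_k|\leq e^{1/k}|\p E_T(u)|$, and hence $\lim_k|\p F_k|=|\p E_T(u)|$.

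The main conceptual point is that the perimeter bound in Lemma~\ref{lemma:maximal_sol}(iid) holds \emph{unconditionally}, without the compact containment $F_k\Subset\Omega_{l_k}$ required for the equality in Lemma~\ref{lemma:ivp_properties}(iii). This sidesteps the delicate scenario in which the obstacle IMCF $u_{l_k}$ has $\sup u_{l_k}\leq t_k$, so $F_k$ coincides with all of $\Omega_{l_k}$ and $\p F_k$ with $\p\Omega_{l_k}$; the estimate above still forces $|\p \Omega_{l_k}|\leq e^{1/k}|\p E_T(u)|$, so such cases are harmless. Avoiding this case analysis (which would otherwise require a careful examination of weak-IMCF jump behavior at the outer obstacle $\p\Omega_l$) is the cleanest feature of the approach.
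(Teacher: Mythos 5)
Your proof is correct and follows essentially the same route as the paper's: fix an exhaustion $\{\Omega_l\}$, use Lemma~\ref{lemma:maximal_sol}(ii) to get the obstacle solutions $u_l$ with $E_T(u_l)=E_T(u)$ via (iie), define $F_k$ as super-level-sets of $u_{l_k}$ at time $T+1/k$ for $l_k$ chosen large enough to guarantee nesting and exhaustion, then squeeze $|\p F_k|$ between the outward-minimizing lower bound $|\p E_T(u)|$ and the exponential perimeter upper bound from (iid). The only noticeable differences are cosmetic: the paper uses $B_g(p,k)$ rather than $\Omega_k$ in the inductive nesting condition, and it cites the outward-minimizing property of $E_T(u)$ directly from Lemma~\ref{lemma:ivp_properties}(ii) (which applies at $t=T>0$ since $E_T(u)\Subset M$) rather than deriving it via lower semicontinuity as $t\nearrow T$. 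Your closing observation about the unconditional applicability of (iid) and the harmlessness of the case $F_k=\Omega_{l_k}$ is a useful clarification that the paper leaves implicit.
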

\begin{proof}
    First, we fix a sequence of precompact smooth domains $\{\Omega_l\}_{l=1}^\infty$ with $E_T(u)\Subset\Omega_1\Subset\Omega_2\Subset\cdots\Subset M$ and $\bigcup_{l\ge1}\Omega_l=M$.
    By Lemma \ref{lemma:maximal_sol}(ii), we know that there exists a sequence of weak solutions $u_l\in\Lip_{\loc}(\Omega_l)$ that converge to $u$ in $C^0_{\loc}$, and by Lemma \ref{lemma:maximal_sol}(iie), we have $E_T(u_l)=E_T(u)$ for all $l$. For each $\eps>0$, note the following facts:
    \begin{enumerate}[topsep=0ex, itemsep=0.3ex]
        \item $\bigcup_{l\geq1}E_{T+\eps}(u_l)=M$. This follows from $u_l\xrightarrow{C^0_{\loc}}u$ and $u\leq T$.
        \item By Lemma \ref{lemma:maximal_sol}(iid) and the above discussions, we have
        \[\P{E_{T+\eps}(u_l)}\leq e^\eps\P{E_T(u_l)}=e^\eps\P{E_T(u)}\qquad\text{for all $l$}.\]
    \end{enumerate}
    So we can inductively choose a sequence $l_k\to\infty$ so that $$E_{T+1/k}(u_{l_k})\Supset B_g(p,k)\cup E_{T+1/(k-1)}(u_{l_{k-1}}).$$ 
    Since $\p E_T(u)$ is outward minimizing, this implies $$|\p E_T(u)|\le |\p E_{T+1/k}(u_{l_k})|\le e^{1/k}|\p E_T(u)|.$$
    This proves the lemma by setting $F_k=E_{T+1/k}(u_{l_k})$.
\end{proof}

The following lemma shows that for any compact subset $K$, we can perturb the metric near infinity, so that the maximal solution ``jumps" out of $K$ but stays compact until a slightly larger time $t>T$.

\setcounter{claim}{0}

\begin{lem}\label{lemma:perturb}
    Suppose $(M^n,g)$ has only one end, and there are $\Lambda_k>0$ for each $k$ such that
    \begin{equation}\label{eq:higher_bounded_geometry}
        \inj\geq\Lambda^{-1}_0,\qquad  |\D^k\Rm|\leq\Lambda_k.
    \end{equation}
    Suppose $u$ is an instantly escaping maximal IMCF on $M$ with an initial value $E_0\Subset M$. Let $T\in(0,\infty)$ be the escape time of $u$.
    
    Then for any $K\Subset M$ and $\epsilon\in(0,1)$, there is a smooth metric $\tilde g$ with $\|\tilde g-g\|_{C^{10}_g}\leq\epsilon$ and with the following properties. If $\tilde u$ is the maximal IMCF in $(M,\tilde g)$ starting from $E_0$, then:
    
    (i) $E_T(\tilde u)=E_T(u)$,
    
    (ii) there exists $t\in(T,T+\eps)$ such that
    $E_t(\tilde u)\Subset M$ and $\p E_t(\tilde u)\cap(M\setminus K)\ne\emptyset$.
\end{lem}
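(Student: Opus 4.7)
The plan is to construct $\tilde g$ by a small conformal rescaling of $g$ supported in an annular region just outside $K$, which obstructs the instantaneous escape of the maximal IMCF at time $T$ and forces a new level set to form in the perturbed annulus, which lies in $M\setminus K$.

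\textbf{Setup.} Apply Lemma \ref{c: a first exhaustion} to obtain nested outward-minimizing precompact domains $F_k$ with $E_T(u)\Subset F_k$, $|\p F_k|_g\to|\p E_T(u)|_g$, and $\bigcup_k F_k=M$. Choose $k_0$ so large that $K\cup E_T(u)\Subset F_{k_0}$ and $|\p F_{k_0}|_g\le(1+\eps/4)|\p E_T(u)|_g$, and fix a slightly enlarged smooth domain $F'\Supset F_{k_0}$. Let $\chi$ be a smooth cutoff with $\chi\equiv 0$ on $F_{k_0}$ and $\chi\equiv 1$ outside $F'$, and set $\tilde g=(1+\eta\chi)g$. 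Since $\chi$ and its derivatives are fixed, $\|\tilde g-g\|_{C^{10}_g}=O(\eta)$; taking $\eta\in(0,\eps)$ sufficiently small yields $\|\tilde g-g\|_{C^{10}_g}\le\eps$.

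\textbf{Condition (i) via semilocal uniqueness.} Since $\tilde g=g$ on $F_{k_0}^\circ$, both $u|_{F_{k_0}^\circ}$ and $\tilde u|_{F_{k_0}^\circ}$ are weak IMCFs on $F_{k_0}^\circ$ with initial value $E_0$. Let $T_\ast:=\sup\{t\ge 0: E_t(u),\,E_t(\tilde u)\Subset F_{k_0}^\circ\}$. By Lemma \ref{lemma:max_principle}, $E_t(u)=E_t(\tilde u)$ for all $t<T_\ast$. If $T_\ast<T$, then $E_{T_\ast}(\tilde u)=\bigcup_{s<T_\ast}E_s(u)\subset E_T(u)\Subset F_{k_0}^\circ$ and analogously $E_{T_\ast}(u)\Subset F_{k_0}^\circ$, contradicting the definition of $T_\ast$. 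Hence $T_\ast\ge T$; passing to the limit $t\to T^-$ gives $E_T(\tilde u)=E_T(u)$.

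\textbf{Condition (ii) via perimeter obstruction and approximation.} The essential point is that any precompact set $\Omega\supsetneq E_T(u)$ whose boundary enters the transition region $F'\setminus F_{k_0}$ has $\tilde g$-perimeter strictly larger than its $g$-perimeter (by a factor up to $(1+\eta)^{(n-1)/2}$), so the ``perimeter-free'' instantaneous jump of $u$ at time $T$ is no longer feasible in $\tilde g$, and a new precompact level set must appear past time $T$. To make this rigorous I would use the approximations $\tilde u_l\searrow\tilde u$ from Lemma \ref{lemma:maximal_sol}(ii) on exhausting domains $\Omega_l\supset F'$ with outer obstacles. Each $\tilde u_l$ is bounded, and its level sets satisfy the perimeter bound $|\p E_t(\tilde u_l)|_{\tilde g}\le e^t|\p E_0|_g$, the density bound of Lemma \ref{lemma:reg_density}, and the diameter bound of Corollary \ref{cor:diam_bound}. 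Since $|\p F_{k_0}|_g$ is within $(1+\eps/4)$ of $|\p E_T(u)|_g$, for $t$ slightly above $T$ the boundary $\p E_t(\tilde u_l)$ must sit close to $\p F_{k_0}$ or further out in $F'\setminus F_{k_0}$ in the $C^{1,\alpha}$ sense; a compactness argument along $l\to\infty$ then yields the desired level set of $\tilde u$ at some $t\in(T,T+\eps)$, precompact and with boundary in $F'\setminus F_{k_0}\subset M\setminus K$.

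\textbf{Main obstacle.} The delicate step is the quantitative pinning of the new level set: one must show both that $\sup\tilde u>T$ (so new precompact level sets exist past $T$) and that these new level set boundaries localize in the transition region $F'\setminus F_{k_0}$ rather than lingering arbitrarily close to $\p E_T(u)$ (inside $K$) or escaping past $F'$ to infinity. The first follows from the perimeter excess bound combined with maximality, using the jumping characterization of weak IMCFs from \cite{Xu24obstacle}; the second requires comparing the near-extremal competitor $\p F_{k_0}$ in $\tilde g$ against other potential minimizers. The cylindrical model end, where the transition region converts an infinite-speed flow into a finite-speed one of order $\eta$, provides the guiding intuition.
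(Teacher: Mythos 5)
Your overall strategy — a conformal enlargement of $g$ outside a fixed region, combined with an approximation by obstacle solutions $\tilde u_l$ and a perimeter-excess argument — is the same as the paper's, but the writeup has two genuine gaps.

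\textbf{Gap in part (i).} Your argument that $T_\ast\ge T$ is incomplete. You define $T_\ast=\sup\{t\ge 0: E_t(u),E_t(\tilde u)\Subset F_{k_0}^\circ\}$, conclude $E_{T_\ast}(u),E_{T_\ast}(\tilde u)\Subset F_{k_0}^\circ$ if $T_\ast<T$, and declare this ``contradicts the definition of $T_\ast$.'' It does not: the supremum may simply be attained. To get a contradiction you would need $E_t(\tilde u)\Subset F_{k_0}^\circ$ for some $t>T_\ast$, but a weak IMCF can jump at $T_\ast$ (this is exactly what $u$ itself does at time $T$), so $E_t(\tilde u)$ can exit $F_{k_0}$ immediately. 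What you are missing is the inequality $\tilde u\ge u$, which would give $E_t(\tilde u)\subset E_t(u)$ for all $t$ and hence $E_T(\tilde u)\Subset M$, after which Lemma \ref{lemma:max_principle} applies globally. The paper obtains $\tilde u\ge u$ from the maximality of $\tilde u$ after first proving (Claim 2) that $u$ is \emph{also} a weak solution in the perturbed metric $(M,\tilde g)$; this uses that $\tilde g\ge g$ and that $u\equiv T$ wherever $\tilde g\ne g$. This step is absent from your proposal and is essential.

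\textbf{Gap in part (ii).} Your construction $\tilde g=(1+\eta\chi)g$ with ``$\eta\in(0,\eps)$ sufficiently small'' is quantitatively too weak. The cutoff $\chi$ interpolates over the fixed annulus $F'\setminus F_{k_0}$, so $\|\tilde g-g\|_{C^{10}}=O(\eta\, C(\chi))$ with $C(\chi)$ depending on the derivatives of $\chi$, forcing $\eta\ll\eps$. But then the conformal factor at infinity is only $1+\eta$, giving a perimeter excess $(1+\eta)^{n-1}$ that may be strictly smaller than the perimeter slack $(1+\eps/4)$ you built into $F_{k_0}$. Concretely: to keep $E_t(\tilde u)$ precompact you need $e^{t-T}<(1+\eta)^{n-1}$, while to force $\p E_t(\tilde u)$ to leave $F_{k_0}$ you need $e^{t-T}>1+\eps/4$ — and when $\eta\ll\eps$ this time window is empty. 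The paper avoids this by smoothing the distance function via a heat flow so that the transition region can be made wide with derivatives controlled solely by the geometry (Claim 1), letting the conformal factor equal $1+\eps$ at infinity while keeping $\|\tilde g-g\|_{C^{10}}\le C\eps$. Finally, your ``Main obstacle'' paragraph explicitly concedes that the quantitative pinning of the new level set (the heart of assertion (ii)) is not carried out; the paper's Claims 3 and 4 do exactly this, showing $E_t(\tilde u)\Subset M$ for $t<T+(n-1)\log(1+\eps)$ and $\overline{E_t(\tilde u)}\not\subset K$ for $t>T+(n-2)\log(1+\eps)$, respectively, via outward-minimizing perimeter comparisons in the regions $\{\tilde g=(1+\eps)g\}$ and $\{\tilde g=g\}$.
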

\begin{proof}
Without loss of generality, we may assume $K\Supset E_T(u)$.
    By Lemma \ref{c: a first exhaustion}, we can replace $K$ by a larger domain and assume that it satisfies
    \begin{equation}\label{e: a small diam surface F_k}
        \Ps{K}_g<(1+\epsilon)^{n-2}|\p E_T(u)|.
    \end{equation}
    Fix a point $p\in E_0$. Assume $K\Subset B_g(p,R_1)$ for some $R_1>0$. We use $C>0$ to denote a generic constant that only depends on finitely many $\Lambda_k$.

    \begin{claim}\label{claim: perturbed metric}
        There exist $R_2>R_1$ and a smooth Riemannian metric $\tilde g$, such that
        \begin{enumerate}[label={(\roman*)}]
            \item $\tilde g\ge g$ and $\|\tilde g-g\|_{C^{10}_g}\leq C\epsilon$;
            \item $\tilde g=g \textnormal{ in }B_g(p,R_1)$, and $\tilde g=(1+\eps)g$ in $M\setminus B_g(p,R_2)$. 
        \end{enumerate}
    \end{claim}
    
    \begin{proof}
        First, we smooth the distance function $d_g(p,\cdot)$ by evolving it by a heat equation: Let $w_t$ be the solution to $\partial_t w_t=\Delta_{g} w_t$ with $w_0= d_g(p,\cdot)$. 
            Then $w_t$ is smooth for any $t>0$.
            For any fixed $x\in M$, we define a time-dependent function $v_t(y):=w_t(y)-w_0(x)$ for all $y\in M$, then by the triangle inequality we have $|v_0(y)|\le d_g(x,y)$.
            By the assumption \eqref{eq:higher_bounded_geometry}, the heat kernel satisfies the Gaussian bound $H(x,t;y,s)\le \tfrac{C}{(t-s)^{n/2}}e^{-\frac{d_g^2(x,y)}{C(t-s)}}$ for all $s<t\in [0,C^{-1}]$, see e.g. \cite[Corollary 26.26]{RFTandA3}. So using the representation formula
            \[w_t(x)=\int_M H(x,t;y,0)w_0(y)\,d\textnormal{vol}_gy,\]
            it is easy to see  $\|v_t\|_{C^0(B_g(x,1))}\le C$ for all $t\in[0,C^{-1}]$.
            So by the standard parabolic estimate \cite{Krylov1996}, it follows for each $t\in[\tfrac12C^{-1},C^{-1}]$ that
            \[\|w_t-w_0(x)\|_{C^{10}_g(B_g(x,1))}= \|v_t\|_{C^{10}_g(B_g(x,1))}\le C\|v_t\|_{C^0_g(B_g(x,1)\times[0,C^{-1}])}\le C.\]
            Therefore, the function $\tilde d:=w_{C^{-1}}$ satisfies
            \begin{equation*}\label{e: cut off function 2}
                \|\nabla \tilde d\|_{C^9_g}\leq C, \quad\textnormal{and}\quad|\tilde d(x)-d_g(p,x)|\le C \textnormal{ for all }x\in M.
        \end{equation*}
        Let $R_2>R_1+3C$ and let $\eta:\mathbb R\to[0,1]$ be a smooth non-decreasing function, such that $\eta\equiv0$ on $(-\infty,R_1+C]$, $\eta\equiv1$ on $[R_2-C,\infty)$, and $\|\eta\|_{C^{10}_g}\le 100$.  Then the metric $\tilde g=\big(1+\eps\eta\circ \tilde d\,\big)\cdot g$ satisfies 
        all assertions.    
    \end{proof}

    In the following, we fix the choice of $R_2>R_1$ and $\tilde g$ as in Claim \ref{claim: perturbed metric}. Then we see that $\tilde g$ satisfies \eqref{eq:higher_bounded_geometry} for all $k\le 8$ with possibly larger $\Lambda_k$ depending only on $C$. Thus Lemma \ref{lemma:reg_density} and Corollary \ref{cor:diam_bound} hold in $(M,\tilde g)$ with a weaker constant. It is helpful to recall the chain of inclusions
    \[E_T(u)\Subset K\Subset B_g(p,R_1)\Subset\big\{\tilde g=g\big\}\Subset\big\{\tilde g\ne(1+\eps)g\big\}\Subset B_g(p,R_2).\]

    \begin{claim}
        $u$ is also a weak solution of IMCF in $(M,\tilde g)$ with initial value $E_0$.
    \end{claim}
    \begin{proof}
        By Definition \ref{def:ivp}, it suffices to show that $u$ is a weak solution of IMCF in $(M\setminus\overline{E_0},\tilde g)$.
        By Definition \ref{def:weak_sol}, this is to show that 
        for any $t\in\R$, $K'\Subset M\setminus\bar{E_0}$ and set $F$ with $F\Delta E_t\Subset K'$, we have $J_{u,\tilde g}^{K'}(E_t)\leq J_{u,\tilde g}^{K'}(F)$. If $t>T$ then $E_t=M$, hence $F=M\setminus K''$ for some $K''\Subset M\setminus\bar{E_0}$. In this case $J_{u,\tilde g}^{K'}(E_t)\leq J_{u,\tilde g}^{K'}(F)$ is equivalent to $|\p K''|+\int_{K''}|\D_{\tilde g}u|\geq0$, which is obviously true.
        Now suppose $t\leq T$. Note the following facts: $\tilde g\geq g$, and $\p E_t(u)\subset\{\tilde g=g\}$, and $\int_A|\D_{\tilde g}u|\,dV_{\tilde g}=\int_A|\D_g u|\,dV_g$ for all set $A$ (since $u\equiv T$ wherever $\tilde g\ne g$). As a result, we have
        \[\begin{aligned}
            J_{u,\tilde g}^{K'}(F)
            &= |\p F\cap K'|_{\tilde g}-\int_{F\cap K'}|\D_{\tilde g}u|\,dV_{\tilde g}
            \geq |\p F\cap K'|_g-\int_{F\cap K'}|\D_g u|\,dV_g \\
            &= J_{u,g}^{K'}(F)
            \geq J_{u,g}^{K'}(E_t)
            = J_{u,\tilde g}^{K'}(E_t),
        \end{aligned}\]
        proving the claim.
    \end{proof}

    Let $\tilde u$ be the maximal weak IMCF in $(M,\tilde g)$ with initial value $E_0$. Hence we have $\tilde u\geq u$.  This implies $E_T(\tilde u)\subset E_T(u)\Subset M$. Since $u$ is also a weak solution in $(M,\tilde g)$ with initial value $E_0$, by Lemma \ref{lemma:max_principle} we have $E_T(\tilde u)=E_T(u)$.

    \begin{claim}
        For all $t<T+(n-1)\log(1+\epsilon)$, we have $E_t(\tilde u)\Subset M$.
    \end{claim}
    \begin{proof}
        Fix such a $t$. Let $\{\Omega_l\}_{l=1}^\infty$ be a smooth precompact exhaustion with
        \[B_g(p,R_2)\Subset\Omega_1\Subset\Omega_2\Subset\cdots\Subset M.\]
        Apply Lemma \ref{lemma:maximal_sol}(ii) to this exhaustion: we obtain a sequence $\tilde u_l$ of maximal weak IMCFs in $(\Omega_l,\tilde g)$ with the same initial value $E_0$. Then $\{\tilde u_l\}$ is a descending sequence, and $\tilde u_l\searrow\tilde u$ in $C^0_{\loc}(M\setminus E_0)$. In particular, note that $E_T(\tilde u_l)\subset E_T(\tilde u)\Subset\Omega_l$. Applying Lemma \ref{lemma:max_principle} in $\Omega_l$, it follows that $E_T(\tilde u_l)=E_T(\tilde u)=E_T(u)$ for each $l$.
    
        By Corollary \ref{cor:diam_bound}(ii) and \eqref{eq:higher_bounded_geometry} and the fact that $M$ is one-ended, to prove the claim it suffices to prove that $E_t(\tilde u)\ne M$. Suppose this is not the case. As $E_t(\tilde u)=\bigcup_{l\geq1}E_t(\tilde u_l)$, we would have $E_t(\tilde u_l)\Supset B_g(p,R_2)$ for some sufficiently large $l$.
        This implies
        \[\p E_t(\tilde u_l)\subset M\setminus B_g(p,R_2)\subset\big\{\tilde g=(1+\eps)g\big\}.\]
        So by our definition of $t$ and Lemma \ref{lemma:maximal_sol}(iid), we have
        \[|\partial E_t(\tilde u_l)|_g
            = \frac{|\partial E_t(\tilde u_l)|_{\tilde g}}{(1+\epsilon)^{n-1}}
            \le e^{t-T}\frac{|\p E_T(\tilde u_l)|_{\tilde g}}{(1+\eps)^{n-1}}
            = e^{t-T}\frac{|\p E_T(u)|_g}{(1+\eps)^{n-1}}
            < |\p E_T(u)|_g.\]
        So we find a surface outside of $\overline{E_T(u)}$ that has a strictly smaller $g$-perimeter, but this contradicts the outward minimizing of $E_T(u)$.
    \end{proof}

    \begin{claim}
        If $T+(n-2)\log(1+\epsilon)<t<T+(n-1)\log(1+\epsilon)$, then $ \overline{E_t(\tilde u)}\cap(M\setminus K)\ne\emptyset$.
    \end{claim}
    \begin{proof}
        Suppose the claim is false at such a time $t$. Note that $E_t(\tilde u)\subset K\Subset M$. By Lemma \ref{lemma:ivp_properties}(iii) and noting that $\tilde g=g$ in $K$ and $E_T(\tilde u)=E_T(u)\Subset K$, we have
        \[\P{E_t(\tilde u)}_{\tilde g}=e^{t-T}\P{E_T(\tilde u)}_{\tilde g}>(1+\eps)^{n-2}|\p E_T(u)|_g.\]
        But recalling \eqref{e: a small diam surface F_k} and $\tilde g=g$ on $K$ this implies
        \[\Ps{K}_{\tilde g}=\Ps{K}_g<(1+\epsilon)^{n-2}|\p E_T(u)|_g< \P{E_t(\tilde u)}_{\tilde g},\] 
        contradicting the outward minimizing property of $\p E_t(\tilde u)$ since we assumed $E_t(\tilde u)\subset K$.
    \end{proof}

    Combining Claim 2 and 3, the lemma is proven.
\end{proof}

\begin{theorem}\label{c: almost minimizing exhaustion}
    Suppose that $M^n$ is one-ended and satisfies $b_1(M)<\infty$, and that there are $\Lambda_k>0$ for each $k$ such that
    \begin{equation}\label{eq:higher_bounded_geometry_2}
        \inj\geq\Lambda^{-1}_0,\quad  |\D^k\Rm|\leq\Lambda_k.
    \end{equation}
    Suppose $u$ is an escaping maximal IMCF on $M$ with a $C^{1,1}$ initial value $E_0\Subset M$. Let $T\in(0,\infty)$ be the escape time of $u$.

    Then there exists $C>0$ such that for any compact subset $K\supset E_T(u)$ and $\delta>0$, there exists a domain $\Omega\Supset K$ such that
    \begin{enumerate}[itemsep=0.3ex, topsep=0.3ex, label={(\roman*)}]
        \item $\p\Omega$ is a connected $C^{1,\alpha}$ surface with $\textnormal{diam}(\Omega)\le C$ and $C^{1,\alpha}$ norm controlled by $C$;
        \item For any $F$ with $E_T(u)\Subset F\Subset M$, we have $|\partial \Omega|_g\leq(1+\delta)|\partial F|_g$.
    \end{enumerate}
    In particular, there exists a sequence of exhaustion by precompact subsets $\{\Omega_k\}_{k=1}^\infty$ satisfying (i) and (ii) for a sequence $\delta_k\to0$ as $k\to\infty$.
\end{theorem}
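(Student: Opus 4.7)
The plan is to take $\Omega$ to be a level set of the maximal IMCF in a perturbed metric supplied by Lemma \ref{lemma:perturb}, tuned so that $\Omega$ encloses $K$ with perimeter close to $|\partial E_T(u)|_g$. The uniform regularity and diameter of $\partial\Omega$ will come from the bounded geometry of the perturbed metric (via Lemma \ref{lemma:maximal_sol}(ia), Lemma \ref{lemma:reg_density}(ii), and Corollary \ref{cor:diam_bound}(i)); the area estimate will come from $\tilde g\ge g$ together with the outward-minimizing property of $\partial E_T(u)$; and the connectedness of $\partial\Omega$ will come from Lemma \ref{lem:connectedness2} once we know $K\Subset\Omega$.

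First I would enlarge $K$ to also contain the fixed compact set $S$ from Lemma \ref{lem:connectedness2} (available since $b_1(M)<\infty$). Then I fix $\eps\in(0,1)$ small with $(1+\eps)^{n-1}\le 1+\delta$ and apply Lemma \ref{lemma:perturb} to the pair $(K,\eps)$ to obtain a smooth metric $\tilde g$, equal to $g$ on $K$ and to $(1+\eps)g$ outside a larger ball, and the maximal weak IMCF $\tilde u$ in $(M,\tilde g)$ satisfying $E_T(\tilde u)=E_T(u)$, together with a time $t\in(T,T+\eps)$ with $E_t(\tilde u)\Subset M$ and $\partial E_t(\tilde u)\cap(M\setminus K)\ne\emptyset$. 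I set $\Omega:=E_t(\tilde u)$.

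Given this set-up, the properties (i) and (ii) are verified as follows. Because $\tilde g$ satisfies \eqref{eq:higher_bounded_geometry_2} with comparable constants and $\tilde u$ is the maximal solution in $(M,\tilde g)$, Lemma \ref{lemma:maximal_sol}(ia) supplies a uniform gradient bound for $\tilde u$, and then Lemma \ref{lemma:reg_density}(ii) together with Corollary \ref{cor:diam_bound}(i) gives that $\partial\Omega$ is $C^{1,\alpha}$ with $C^{1,\alpha}$ norm and component-diameter bounded by a constant $C$ depending only on $\{\Lambda_k\}$ and $E_0$. By Lemma \ref{lemma:maximal_sol}(ib,\,ic), $\Omega$ is connected and $M\setminus\Omega$ has no bounded components. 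The area bound is
\[
|\partial\Omega|_g\le |\partial\Omega|_{\tilde g}\le e^{t-T}|\partial E_T(\tilde u)|_{\tilde g}=e^{t-T}|\partial E_T(u)|_g\le (1+\eps)^{n-1}|\partial E_T(u)|_g\le(1+\delta)|\partial F|_g
\]
for every $F\supset E_T(u)$, using Lemma \ref{lemma:maximal_sol}(id), the identity $\tilde g=g$ on $K\supset E_T(u)$, and the outward-minimizing property of $\partial E_T(u)$ from Lemma \ref{lemma:ivp_properties}(ii). Once we know $K\Subset\Omega$, the chain $S\subset K\subset\mathrm{int}(\Omega)$ forces $\partial\Omega\cap S=\emptyset$, so Lemma \ref{lem:connectedness2} rules out the ``every component meets $S$'' alternative and $\partial\Omega$ must be connected.

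The main obstacle is therefore the enclosure $K\Subset\Omega$, since Lemma \ref{lemma:perturb} alone only guarantees $E_t(\tilde u)\not\subset K$. To close this gap I would combine the perturbation with the approximation $\tilde u_\ell\to\tilde u$ in $C^0_{\mathrm{loc}}$ provided by Lemma \ref{lemma:maximal_sol}(ii) applied in $(M,\tilde g)$: since $K$ is compact and $\tilde u$ is locally Lipschitz with $\sup_K\tilde u<\infty$, one chooses $\ell$ large enough that $\tilde u_\ell<t$ on $K$ (so that $K\Subset E_t(\tilde u_\ell)$) and at the same time large enough that $E_t(\tilde u)\Subset\Omega_\ell$, so by Lemma \ref{lemma:max_principle} we have $E_t(\tilde u_\ell)=E_t(\tilde u)$, preserving all of the regularity, diameter, and area estimates above; an iteration of Lemma \ref{lemma:perturb}, at each step enlarging the test compact set to contain the preceding level set, serves as an alternative since the $(1+\eps)^{n-1}$ factors can be telescoped into $1+\delta$. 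Finally, the exhausting sequence $\{\Omega_k\}$ is produced by applying the construction to $K_k\nearrow M$ and $\delta_k\to 0$.
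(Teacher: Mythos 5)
Your overall outline matches the paper's: perturb the metric via Lemma \ref{lemma:perturb}, take $\Omega$ to be a level set $E_t(\tilde u)$ just beyond the escape time, extract regularity and diameter bounds from Lemma \ref{lemma:reg_density} and Corollary \ref{cor:diam_bound}, get connectedness from Lemma \ref{lem:connectedness2}, and derive the area bound from the outward-minimizing property of $E_T$. You also correctly isolate the crux: Lemma \ref{lemma:perturb} only yields $\p E_t(\tilde u)\cap(M\setminus K)\neq\emptyset$, i.e.\ $E_t(\tilde u)\not\subset K$, which is far from the needed enclosure $K\Subset E_t(\tilde u)$.

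However, neither of your two proposed repairs closes that gap. The approximation route is circular: $\tilde u_\ell\searrow\tilde u$ uniformly on compact sets, so ``$\tilde u_\ell<t$ on $K$ for $\ell$ large'' is available if and only if $\tilde u<t$ on $K$ already holds — but $\tilde u<t$ on $K$ is precisely the statement $K\subset E_t(\tilde u)$ you are trying to prove, and there is no a priori reason that $\sup_K\tilde u<t$ (it is entirely possible that $\p E_t(\tilde u)$ has other components lying inside $K$). The iteration route is too vague to evaluate, and since each application of Lemma \ref{lemma:perturb} is made from the same escaping solution $u$ in $(M,g)$, it is not clear that enlarging the test compact step by step ever forces $K$ inside the resulting level set. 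The paper's resolution is to feed a \emph{much larger} compact set $K'$ into Lemma \ref{lemma:perturb}: take $K'\Supset N_g(S\cup K,C_1)$, where $C_1$ is the a priori component-diameter bound from Corollary \ref{cor:diam_bound}(i) (which the paper observes is independent of $K'$, depending only on $T$, the $\Lambda_k$, and $\p E_0$). Then the component of $\p E_t(\tilde u)$ exiting $K'$ must lie entirely outside $S\cup K$; Lemma \ref{lem:connectedness2} then forces $\p E_t(\tilde u)$ to be connected (since the ``every component meets $S$'' alternative fails); and the diameter bound applied to that single connected component shows $\p E_t(\tilde u)\cap K=\emptyset$, whence $K\Subset E_t(\tilde u)$. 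In short: the tools you cite are exactly the right ones, but the diameter bound must be used \emph{before} knowing $K\Subset\Omega$ (to enlarge the input to Lemma \ref{lemma:perturb}), not after, and the connectedness conclusion then delivers the enclosure rather than being a consequence of it.
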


\begin{proof}
   Note that $M$ is one-ended. Thus for any $K'\Subset M$ and $\eps<1/100$, by Lemma \ref{lemma:perturb} we find a metric $\tilde g$ with $\|\tilde g-g\|_{C^{10}_g}\le\eps$ and a maximal IMCF $\tilde u$ in $(M^n,\tilde g)$, such that $E_{T}(u)=E_{T}(\tilde u)$, and for some $t\in(T,T+\eps)$ we have $E_t(\tilde u)\Subset M$, $\p E_t(\tilde u)\cap(M\setminus K')\ne\emptyset$.

    By Lemma \ref{lemma:maximal_sol}(ib,ic), $E_t(\tilde u)$ is connected, and $M\setminus E_t(\tilde u)$ has only unbounded connected components. Combining Corollary \ref{cor:diam_bound}(i) and $\|\tilde g-g\|_{C^{10}_g}\leq1/100$, we have that each connected component $\Sigma$ of $\p E_t(\tilde u)$ satisfies
    \begin{equation}\label{eq:diam1}
        \diam_g(\Sigma)\leq C\big(T+1,\Lambda,|\p E_0|,H_{\p E_0}^+\big)=:C_1.
    \end{equation}
    In particular, this bound is independent of $K'$. Now let $S$ be the compact set obtained in Lemma \ref{lem:connectedness2}, and choose $K'$ such that $N_g(S\cup K,C_1)\Subset K'\Subset M$, where $N_g(X,r)$ denotes the $r$-neighborhood of a subset $X$. Then it follows from $\p E_t(\tilde u)\cap(M\setminus K')\ne\emptyset$ that some connected component of $\p E_t(\tilde u)$ is disjoint from $S\cup K$. Inserting this into Lemma \ref{lem:connectedness2}, we see that $\p E_t(\tilde u)$ is connected. Hence by \eqref{eq:diam1} again we have $\p E_t(\tilde u)\cap K=\emptyset$, which implies $E_t(\tilde u)\Supset K$.
    
    Moreover, by Lemma \ref{lemma:reg_density}(ii) and $\|\tilde g-g\|_{C^{10}_g}\leq1/100$, $\p E_t(\tilde u)$ has bounded $C^{1,\alpha}$ norm with respect to $\tilde g$, hence with respect to $g$ as well. This proves (i) by setting $\Omega=E_t(\tilde u)$.
    
    Since $E_{T }(\tilde u)$ is outward minimizing in $(M,\tilde g)$, for any $F\Supset E_{T}(\tilde u)$ we have
    \[(1+\eps)^{n-1}|\partial F|_{g}
        \ge|\partial F|_{\tilde g}
        \ge |\p E_{T}(\tilde u)|_{\tilde g}
        \ge e^{-(t-T)}|\partial \Omega|_{\tilde g}
        \ge e^{-\eps}|\partial \Omega|_{g}.\]
    By taking $\eps$ sufficiently small, this implies assertion (ii) since $E_T(\tilde u)=E_T(u)$.
\end{proof}

\section{Proof of the main theorems}\label{sec:proof}

Now we can prove Theorem \ref{thm-main:R3} and \ref{thm-main:handlebody}.
Recall that in these two theorems, the manifolds have non-negative scalar curvature and bounded geometry. 

\begin{rmk}\label{rmk:higher_deriv}
    Let $(M,g)$ be a manifold as in Theorem \ref{thm-main:R3} or \ref{thm-main:handlebody}.
    Without loss of generality, we may assume that the scalar curvature is strictly positive, and there are constants $\Lambda_k>0$ for each $k\geq0$ such that
    \begin{equation}\label{e: can take smooth limit}
        \inj\geq\Lambda_0^{-1},\qquad |\nabla^k\Rm|\leq\Lambda_k.
    \end{equation}

    Indeed, thanks to the bounded geometry assumption, we can run a smooth complete Ricci flow $(M,g(t))$, $t\in[0,t_0]$ with $g(0)=g$ for some $t_0>0$, and $(M,g(t_0))$ satisfies \eqref{e: can take smooth limit} by Shi's derivative estimates \cite{Shi1987derivative1}.
    Moreover, if $R>0$ does not hold everywhere on $(M,g(t_0))$, then by the strong maximum principle, $(M,g(t))$ is flat for all $t\in[0,t_0]$.
    In the context of Theorem \ref{thm-main:R3}, this implies $M\cong\R^3$. In the context of Theorem \ref{thm-main:handlebody}, this leads to a contradiction.
    Therefore, replacing $g$ with $g(t_0)$, we may assume that \eqref{e: can take smooth limit} holds and $R>0$ strictly.
\end{rmk}

\begin{proof}[Proof of Theorem \ref{thm-main:R3}]
    By Remark \ref{rmk:higher_deriv}, we may assume $R>0$ and \eqref{e: can take smooth limit} holds.

    Fix $E_0=B(p,r_0)$ as in Lemma \ref{lemma:non_trivial}, and let $u$ be the maximal weak solution of IMCF starting from $E_0$.  Since $(M,g)$ is contractible, by Lemma \ref{lem:contract-ends} it has one end, and then Lemma \ref{l: three cases} implies that $u$ is either proper, sweeping, or instantly escaping. We prove the main theorem in three cases.

    \textbf{Case 1:}  Suppose $u$ is proper. Since $\p E_0$ is connected and $M$ is contractible, by Lemma \ref{lemma:maximal_sol}(ib,\,ic) and Lemma \ref{lem:connectedness} we see that all the $\p E_t$ ($t>0$) are connected. By Lemma \ref{lem:monotonicity} we have
    \[\int_0^t4\pi\chi(\partial E_t)\,dt\ge\int_{\partial E_t} H^2\,d\mu_t - \int_{\partial E_0} H^2\,d\mu_0\ge - \int_{\partial E_0} H^2\,d\mu_0,\quad\forall\,t>0.\]
    Thus there exists a sequence $t_i\to\infty$ such that $\p E_{t_i}$ is either an $\mathbb S^2$ or $\mathbb T^2$, for all $i$.
    Since $u\in\Lip_{\loc}(M)$, the sets $E_{t_i}$ must form an exhaustion of $M$.
    Thus we obtain an exhaustion of $M$ by precompact domains with boundaries either $\mathbb S^2$ or $\mathbb T^2$.
    If there are infinitely many $\mathbb S^2$, then Lemma \ref{lem:contractS2} implies that $M\cong R^3$.
    If there are infinitely many $\mathbb T^2$, Lemma \ref{lem:contractT2} implies that $M\cong\R^3$.
    
    \textbf{Case 2:} Suppose $u$ is sweeping. Then recall from Definition \ref{def:sol_types} that there exists $T\in(0,\infty)$ such that $E_t\Subset M$ for all $t<T$, and $E_T=\bigcup_{t\in[0,T)}E_t=M$.
    By Lemma \ref{lemma:maximal_sol}(ib,\,ic) and Lemma \ref{lem:connectedness}, each $\p E_t$ is connected. Then by Corollary \ref{cor:diam_bound}, for all $t\in[0,T)$ we have that $\p E_t$ is uniformly $C^{1,\alpha}$-bounded and has uniformly bounded diameters.
    Thus, we can select a sequence $t_i\nearrow T$ such that
    \begin{equation}\label{eq-R3:sparseness}
        \lim_{i\to\infty}d_g\big(\p E_{t_{i-1}},\p E_{t_i}\big)=\infty.
    \end{equation}
    Next, recall that each $E_{t_i}$ is outward area-minimizing. Hence for each $i$ and set $F$ with $E_{t_{i-1}}\subset F\Subset M$, we have
    \begin{equation}\label{eq-R3:almost_min}
        |\p F|\geq\big|\p E_{t_{i-1}}(u)\big|=e^{t_{i-1}-t_i}\big|\p E_{t_i}(u)\big|.
    \end{equation}

    Now fix basepoints $p_i\in\p E_{t_i}$, so $p_i\to\infty$ as $i\to\infty$. By \eqref{e: can take smooth limit}, we may pass to a subsequence and assume $(M,g,p_i)$ converge smoothly to a manifold $(M_\infty,g_\infty,p_\infty)$ with $R\ge0$, and $\partial E_{t_i}$ converge in the $C^{1,\beta}$-sense for some $\beta\in(0,\alpha)$ to a connected closed surface $\Sigma_\infty\subset M_\infty$. Combining \eqref{eq-R3:almost_min} and \eqref{eq-R3:sparseness} and the standard set replacing argument (see for example \cite[Theorem 21.14]{Maggi}), it follows that $\Sigma_\infty$ is locally area-minimizing. Since $g_\infty$ has nonnegative scalar curvature, it follows by Schoen-Yau \cite{SchoenYau1979} that $\Sigma_\infty$ must be $\mathbb S^2$ or $\mathbb T^2$. Hence there is an infinite subsequence of $\big\{\p E_{t_i}\big\}$ which either consists of $\mathbb S^2$ or consists of $\mathbb T^2$. This proves the theorem as in the end of Case 1.

    \textbf{Case 3: } Suppose $u$ is escaping. Then let $\{\Omega_k\}_{k=1}^\infty$ be the exhaustion of $C^{1,\alpha}$ domains as in Theorem \ref{c: almost minimizing exhaustion}. Fixing basepoints $p_k\in \p\Omega_k$, we have that up to a subsequence, $(M,g,p_k)$ smoothly converges to $(M_\infty,g_\infty,p_{\infty})$ with $R\ge0$, and $\p\Omega_k$ converge in $C^{1,\beta}$-sense to a connected closed surface $\Sigma_\infty\subset M_\infty$. By Theorem \ref{c: almost minimizing exhaustion}(ii) and set replacing, $\Sigma_\infty$ is in fact locally area-minimizing. Arguing as in Case 2, this proves the theorem.
\end{proof}

\begin{proof}[Proof of Theorem \ref{thm-main:handlebody}]
    Suppose $(M,g)$ is a solid handlebody, where $g$ is complete with $R\ge0$ and bounded geometry. Due to Remark \ref{rmk:higher_deriv}, we may assume that $R>0$ and \eqref{e: can take smooth limit} holds. Same as the proof of Theorem \ref{thm-main:R3}, consider a maximal weak IMCF $u$ starting from a sufficiently small geodesic ball. In either case, the same argument in Theorem \ref{thm-main:R3} implies that $M$ can be exhausted by bounded domains with $\mathbb S^2$ or $\mathbb T^2$ boundaries. By Lemma \ref{lemma:handlebody_exhaustion}, the genus of $M$ is at most 1.
\end{proof}

\bibliography{refs}
\bibliographystyle{abbrv}

\end{document}